\documentclass[11pt]{article}

\usepackage{amsthm}
\usepackage{amssymb}
\usepackage{mathtools}
\usepackage{mathrsfs}
\usepackage[numbers]{natbib}
\usepackage{enumitem}
\usepackage[margin=1in]{geometry}
\usepackage{titlesec}
\usepackage{chngcntr}
\usepackage{subcaption}

\titleformat{\section}
{\centering\large\scshape}{\thesection.}{.5em}{}

\newtheorem{thm}{Theorem}[section]
\newtheorem{lem}[thm]{Lemma}
\newtheorem{prop}[thm]{Proposition}
\newtheorem{cor}[thm]{Corollary}

\theoremstyle{definition}

\theoremstyle{remark}


\newcommand{\Z}{\mathbb{Z}}

\newcommand{\R}{\mathbb{R}}
\newcommand{\C}{\mathbb{C}}

\newcommand{\pth}[1]{\left( #1 \right)}

\newcommand{\hor}[1]{\left[ #1 \right)}

\newcommand{\set}[1]{\left\{ #1 \right\}}
\newcommand{\abs}[1]{\left| #1 \right|}

\newcommand{\floor}[1]{\left\lfloor #1 \right\rfloor}

\newcommand{\sumabs}[1]{\bigg| #1 \bigg|}
\newcommand{\sumpth}[1]{\bigg( #1 \bigg)}
\newcommand{\fracp}[2]{\pth{\frac{#1}{#2}}}
\newcommand{\dfracp}[2]{\pth{\dfrac{#1}{#2}}}
\newcommand{\fract}[2]{{\textstyle\frac{#1}{#2}}}
\newcommand{\mand}{\qquad\text{and}\qquad}

\renewcommand{\mod}[1]{\ (\textnormal{mod}\ #1)}

\newcommand{\ep}{\varepsilon}
\renewcommand{\Re}{\mathrm{Re}\phantom{.}}
\renewcommand{\Im}{\mathrm{Im}\phantom{.}}

\renewcommand{\phi}{\varphi}

\counterwithin*{equation}{section}

\newcommand{\cola}{\mathcal{A}}\newcommand{\colb}{\mathcal{B}}\newcommand{\colc}{\mathcal{C}}\newcommand{\cold}{\mathcal{D}}\newcommand{\coll}{\mathcal{L}}\newcommand{\coln}{\mathcal{N}}\newcommand{\cols}{\mathcal{S}}

\newcommand{\ida}{\mathfrak{a}}\newcommand{\idp}{\mathfrak{p}}

\usepackage{tikz}
\usepackage{pgfplots}
\usetikzlibrary{calc}
\pgfplotsset{compat=1.7}

\begin{document}
\bibliographystyle{abbrv}

\begin{center}
{\large\bf GAUSSIAN PRIMES IN NARROW SECTORS}\\[1.5em]
{\scshape Joshua Stucky}
\end{center}
\vspace{.5em}

\section{Introduction}

\indent A classical result of Huxley \citep{Huxley} states that for sufficiently large $x$ and any $\theta > 7/12$, the interval $[x,x+x^{\theta}]$ contains a rational prime. In this paper, we investigate an analogous problem about Guassian primes. To be precise, let $\phi\in\R$, $0 < \delta \leq \pi/2$, $0<\theta \leq 1$, and $x$ large. We are interested in the cardinality of the set
\[
\set{a+bi\in\Z[i]: (a+bi)\ \text{is prime,}\ \phi < \arg(a+bi)\leq \phi+\delta,\ x-x^\theta < a^2+b^2 \leq x}.
\]
Here $(a+bi)$ denotes the ideal generated by $a+bi$. As is common in such problems, it is more convenient to count these ideals with a suitable weight. Denote by $\ida$ the ideal in $\Z[i]$ generated by $a+bi$ and by $N\ida = a^2+b^2$ its norm. If we define
\[
\Lambda(\ida) = \begin{cases}
\log N\ida & \text{if $\ida=\idp^m$ with $\idp$ prime and $m \geq 1$}, \\
0\phantom{====} & \text{otherwise},
\end{cases}
\]
then our problem translates to obtaining an asymptotic estimate for 
\[
\psi(x,y;\phi,\delta) = \sum_{\substack{x-y < N\ida \leq x\\ \phi < \arg \ida \leq \phi+\delta}} \Lambda(\ida).
\]
Ricci \citep{RicciThesis} has shown that for all $\ep > 0$ and $\delta \geq x^{-3/10+\ep}$, one has
\[
\psi(x,x;\phi,\delta) \sim \frac{2\delta x}{\pi}.
\]
We generalize this and prove the following
\begin{thm}\label{thm:Asymptotic}
For any $\ep > 0$, $\phi \in \R$, $x$ sufficiently large, $\theta > 7/10$, and $\delta x^\theta \geq x^{7/10+\ep}$, we have
\[
\psi(x,x^\theta,\phi,\delta)  \sim \frac{2\delta x^\theta}{\pi}.
\]
\end{thm}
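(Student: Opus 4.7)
The plan is to adapt Huxley's classical approach for primes in short intervals to the Gaussian setting via Hecke Grossencharacters. Since the unit group of $\Z[i]$ has order four, the relevant Grossencharacters of modulus one on the ideals of $\Z[i]$ are $\lambda^m(\ida) = (\alpha/|\alpha|)^{4m}$ for $m \in \Z$, where $\alpha$ is any generator of $\ida$; these characters detect the argument of $\ida$ on the circle $\R/(\pi/2)\Z$.

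First, I would approximate the indicator of the arc $(\phi, \phi+\delta]\ (\mathrm{mod}\ \pi/2)$ by smooth Selberg-type majorants and minorants $h^{\pm}$ whose Fourier coefficients $\widehat{h^{\pm}}(m)$ are supported in $|m| \leq M$ with $M := \delta^{-1}x^{\ep}$. Expanding $h^{\pm}(\arg \ida) = \sum_m \widehat{h^{\pm}}(m)\lambda^m(\ida)$ reduces the problem to estimating the twisted Chebyshev sums
\[
\Psi_m(x,y) = \sum_{x-y < N(\ida) \leq x}\Lambda(\ida)\lambda^m(\ida), \qquad y = x^{\theta},
\]
for each $|m| \leq M$, with the $m=0$ term producing the main contribution and the tail $|m| > M$ being negligible.

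Next, I would apply a truncated Perron formula to express each $\Psi_m(x,y)$ as a contour integral of $-\frac{L'}{L}(s, \lambda^m)$ weighted by $(x^s - (x-y)^s)/s$ along a vertical line just to the right of $\Re s = 1$, then shift past the critical strip. For $m = 0$, the simple pole of $L(s, \lambda^0) = \zeta_{\Q(i)}(s)$ at $s = 1$ contributes the main term $y$, which combined with $\widehat{h^{\pm}}(0) = 2\delta/\pi$ yields the asserted $2\delta y/\pi$. For $m \neq 0$, the function $L(s, \lambda^m)$ is entire, and the shifted integral is controlled by a sum over nontrivial zeros, producing an error
\[
\ll y \sum_{\substack{\rho = \beta + i\gamma\\ L(\rho, \lambda^m) = 0,\ |\gamma| \leq T}} x^{\beta - 1},
\]
where $T := x^{1-\theta+\ep}$ is chosen to balance the Perron truncation error.

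The main obstacle, and the technical heart of the argument, is establishing a two-parameter zero-density estimate for Hecke $L$-functions of the shape
\[
\sum_{|m| \leq M} N(\sigma, T, \lambda^m) \ll (MT)^{\tfrac{10}{3}(1-\sigma) + \ep},
\]
where $N(\sigma, T, \lambda^m)$ counts zeros of $L(s, \lambda^m)$ with $\Re s \geq \sigma$ and $|\Im s| \leq T$. This is the Gaussian analog of Huxley's estimate $N(\sigma, T) \ll T^{12(1-\sigma)/5 + \ep}$ for $\zeta(s)$, and the exponent $10/3$ is calibrated precisely so that under the hypothesis $MT \ll x^{3/10-\ep}$ (equivalent to $\delta x^{\theta} \geq x^{7/10+\ep}$), the sum $\int_{1/2}^{1} x^{\sigma-1}\,dN(\sigma,T,\lambda^m)$ integrated and summed over $|m| \leq M$ is $o(1)$, rendering the total zero contribution $o(y)$. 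Proving this bound requires large values estimates and high-moment (typically fourth-power) mean-value bounds for Dirichlet polynomials twisted by $\lambda^m$ with fully uniform dependence on the character frequency, assembled via the Halász--Montgomery method. Once this density estimate is in place, a standard Vinogradov--Korobov-type zero-free region for $L(s, \lambda^m)$ handles the zeros close to $\Re s = 1$, and the desired asymptotic follows.
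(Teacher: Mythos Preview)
Your overall plan is sound and the target exponent $10/3$ is exactly right, but the paper takes a different route and your sketch omits one ingredient that turns out to be indispensable.

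The paper does \emph{not} go through the explicit formula and a zero-density estimate. Instead, after the same Fourier expansion in $\lambda^m$ and a smoothing of both the angular and radial cutoffs, it applies a Heath-Brown combinatorial identity for $\Lambda(\ida)$ (Lemma~\ref{lem:HeathBrown}) and reduces Theorem~\ref{thm:Asymptotic} to a mean-value statement for products of Dirichlet polynomials, namely Lemma~\ref{lem:MainLemma}. This in turn is reduced to bounding the number $R$ of pairs $(m,t)$ with $m\asymp M$, $t\asymp T$ at which a given factor polynomial is large, and the key estimate is $R\ll (MT)^{10(1-\sigma)/3}(\log x)^B$ (display~(\ref{eq:RMTBound})). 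This is morally the same object as your hybrid zero-density bound, so the analytic core of the two approaches coincides; the Heath-Brown packaging simply avoids the explicit formula and works entirely with finite polynomials. The paper's literature remark on Maknys is a caution that the zero-density route in this setting has historically been delicate.

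The substantive gap in your proposal is the toolset you name for proving the $10/3$ bound. Hal\'asz--Montgomery large values plus a mean-value (or fourth-moment) estimate are not enough here, because the available hybrid large sieve over both $m$ and $t$ (Coleman, Lemma~\ref{lem:ColemanSieve}) gives $M^2+T^2+N$ rather than the ideal $MT+N$; when $M$ and $T$ are unbalanced this loss is fatal in certain ranges. The paper's case analysis (Sections~\ref{sec:LongPolynomials1}--\ref{sec:LongPolynomials2}) shows that one must also invoke Ricci's pointwise subconvexity bound $L(\tfrac12+it,\lambda^m)\ll (m^2+t^2)^{1/6+\ep}$ (Theorem~\ref{thm:Subconvexity}) to cap $\sigma$ in the worst cases; see in particular case~1(ii)(b), where the paper states explicitly that ``the subconvexity restriction is essential.'' Your zero-density argument would need the same input, and you should flag it.
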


Geometrically, the parameters $x,\theta,\phi,\delta$ describe a sector centered at the origin. The inner and outer radii of this sector are $\sqrt{x-x^\theta}$ and $\sqrt{x}$, and the sector is cut by rays emanating from the origin with angles $\phi$ and $\phi+\delta$. Ricci's result gives the expected number of prime ideals in a sector so long as the inner radius $\sqrt{x-x^\theta}$ is essentially 0 and the angle $\delta$ between the rays is sufficiently wide. Theorem \ref{thm:Asymptotic} claims the more general result that one obtains the expected number of prime ideals so long as the area of the sector is sufficiently large.\\

\noindent\textbf{A note on the literature.} It should be noted that Maknys \citep{MaknysSectors} has claimed a result similar to Theorem \ref{thm:Asymptotic}, but with the exponent $11/16$ in place of $7/10$. However, Heath-Brown \citep{HeathBrownReview} has found an error in Maknys' proof of this result. He states that Maknys' proof, when corrected, yields the exponent $(221+\sqrt{201})/320 = 0.7349...$.  However, the result is potentially worse than $0.7349...$ because Maknys's proof depends on a zero density estimate (Theorem 2 of \citep{MaknysZeros}), the proof of which also contains an error. In particular, there is an incorrect application of Theorem 1 of \citep{MaknysAux}. For a version of Theorem 1 of \citep{MaknysAux} which is applicable in the proof of Maknys' zero density result, see Theorem 6.2 and the end of Section 7 of \citep{ColemanBinary}. \\

\noindent\textbf{Outline of the Paper} To orient the reader, we provide an outline of the paper. In Section \ref{sec:InitialDecomposition}, we begin by smoothing the angular and norm regions for $\psi(x,x^\theta,\phi,\delta)$, and then express these regions via a sum of Hecke characters $\lambda^m$ and an integral of $(N\ida)^{it}$. The main term in Theorem \ref{thm:Asymptotic} then arises from the contribution of the principal character. After applying an analogue of Heath-Brown's identity in $\Z[i]$ (see Lemma \ref{lem:HeathBrown} below), we are left to bound a sum of $O((\log x)^{2J+2})$ expressions roughly of the form
\[
\sum_{M\leq m\leq 2M} c_m \int_{T}^{2T} \tilde{V}\pth{\fract{1}{2}+it} \sum_{\substack{\ida=\ida_1\cdots\ida_{2J}\\ N\ida_j \asymp N_j}} a_1(\ida_1)\cdots a_{2J}(\ida_{2J}) \frac{\lambda^m(\ida)}{(N\ida)^{1/2+it}}\ dt
\]
for some parameters $N_i$. Here the $c_m$ are Fourier coefficients and $\tilde{V}$ is a Mellin transform. Using estimates for $c_m$ and $\tilde{V}$, this reduces to showing that
\[
\sum_{M\leq m\leq 2M} \int_{T}^{2T} \abs{F\pth{\fract{1}{2}+it}} dt \ll \frac{x^{1/2}}{(\log x)^A},
\]
where $F$ is the Dirichlet series appearing in the penultimate display.

In Section \ref{sec:Reduction}, we reduce this to bounding the number $R$ of pairs $m,t$ for which a particular factor $f$ of $F$ attains a large value. Specifically, for such a pair $m,t$, we have
\[
\sumabs{\sum_{N\ida \asymp N} c(\ida)\lambda^m(\ida)(N\ida)^{-it}} \gg W
\]
for some divisor-bounded coefficients $c(\ida)$ and $W>0$. In Section \ref{sec:MeanAndLarge}, we use mean- and large-value estimates to bound $R$. Specifically, we use a hybrid large sieve estimate due to Coleman and an analogue of Huxley's large value result, also due to Coleman. Writing $G = \sum \abs{c(\ida)}^2$, these yield
\[
\begin{aligned}
R &\ll NGW^{-2} + (M^2+T^2)GW^{-2}, \\
R &\ll NGW^{-2} + (M^2+T^2)NG^3W^{-6},
\end{aligned}
\]
respectively. We also use the ``trivial'' estimate
\[
R \ll \min(M,T)NGW^{-2} + MTGW^{-2},
\]
as well as a subconvexity result for the Hecke $L$-function, $L(s,\lambda^m)$, due to Ricci. There are a variety of ranges for $N,M,T$ to consider when deciding which estimate to use. This requires a case analysis which is done in Sections \ref{sec:ShortPolynomials} -- \ref{sec:LongPolynomials2}. Here we also indicate the ``worst cases'' of $N,M,T$ for which our estimates are sharp. 

We note that with an optimal large sieve, one would have the estimate
\begin{equation}\label{eq:OptimalLargeSieve}
R \ll NGW^{-2} + MTGW^{-2}.
\end{equation}
Although such a large sieve is not available in the literature, this would not improve our results (it would, however, simplify the case analysis). This is because one of the worst cases in our analysis remains a worst case when using this estimate. See Section \ref{sec:Conclusion} for this discussion.  \\

\noindent {\scshape\bf Acknowledgments.} I would like to thank my advisor, Xiannan Li, for suggesting this problem to me and for many helpful comments in the development of these results, as well as the referee for a number of useful suggestions to make this paper more elegant, readable, and transparent.

\section{Notation and Preliminary Lemmas}

We collect here some additional notation and lemmas we will need throughout the paper. The symbols $o, O,\ll,\gg,\asymp$ have their usual meanings. The letter $\ep$ denotes a sufficiently small positive real number, while $A,B,C$ stand for an absolute positive constants, all of which may be different at each occurrence. For example, we may write 
\[
x^\ep \log x \ll x^\ep, \qquad (\log x)^B (\log x)^B \ll (\log x)^B
\]
Any statement in which $\ep$ occurs holds for each positive $\ep$, and any implied constant in such a statement is allowed to depend on $\ep$. The implied constants in any statement involving the letters $A,B,C$ are also allowed to depend on these variables.

Similar to $\Lambda(\ida)$, we define
\[
\mu(\ida) = \begin{cases}
(-1)^r& \text{if $\ida=\idp_1\cdots\idp_r$ with $\idp_i$ distinct primes}, \\
0\phantom{====} & \text{otherwise}.
\end{cases}
\]
Let $\arg\ida$ be the argument of any one of the generators of $\ida$ (which is unique mod $\pi/2$). For $m\in\Z$, we define the angular Hecke characters
\[
\lambda^m(\ida) = e^{4im\arg\ida} = \fracp{\alpha}{\abs{\alpha}}^{4m},
\]
which are primitive with conductor $(1)$. Note that the character is well-defined since the particular generator $\alpha$ chosen for the definition above is immaterial. From these we get the Hecke $L$-functions, defined for $\Re s> 1$ by
\[
L(s,\lambda^m) = \sum_{\ida} \frac{\lambda^m(\ida)}{(N\ida)^s}.
\]
Here the sum is over all nonzero ideals of $\Z[i]$. These $L$-functions are absolutely convergent for $\Re s > 1$, and Hecke showed that, for $m\neq 0$, they have analytic continuation to all of $\C$ and satisfy a functional equation. We also have
\[
\frac{1}{L(s,\lambda^m)} = \sum_{\ida} \frac{\lambda^m(\ida)\mu(\ida)}{(N\ida)^s}, \qquad -\frac{L'(s,\lambda^m)}{L(s,\lambda^m)} = \sum_{\ida} \frac{\lambda^m(\ida)\Lambda(\ida)}{(N\ida)^s},
\]
which are also absolutely convergent for $\Re s > 1$. We summarize these facts in the following

\begin{lem}\label{lem:FunctionalEquation}
The function $L(s,\lambda^m)$ satisfies the functional equation
\begin{equation}\label{eq:FunctionalEquation}
L(s,\lambda^m) = \gamma(s,\lambda^m)L(1-s,\lambda^m),
\end{equation}
where
\[
\gamma(s,\lambda^m) = \pi^{2s-1} \frac{\Gamma(1-s+2\abs{m})}{\Gamma(s+2\abs{m})}.
\]
If $m \neq 0$, then $L(s,\lambda^m)$ is entire, and otherwise it is meromorphic with a simple pole at $s=1$ with residue $\frac{\pi}{4}$. We also have
\begin{equation}\label{eq:LSymmetry}
L(s,\lambda^m) = L(s,\lambda^{-m}).
\end{equation}
\end{lem}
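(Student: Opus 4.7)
This lemma is classical (Hecke), and my plan is to reproduce Hecke's theta-function argument, specialized to $K = \Q(i)$ and the angular Grossencharacter $\lambda^m$. To begin, the symmetry \eqref{eq:LSymmetry} is immediate: complex conjugation $\ida \mapsto \bar\ida$ is a norm-preserving bijection on nonzero ideals of $\Z[i]$, and if $\alpha$ generates $\ida$ then $\bar\alpha$ generates $\bar\ida$ with $\arg\bar\alpha \equiv -\arg\alpha \mod{\pi/2}$. Hence $\lambda^m(\bar\ida) = \lambda^{-m}(\ida)$, and re-indexing the Dirichlet series yields \eqref{eq:LSymmetry}. This lets me assume $m \geq 0$ for the remainder.

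Next I would represent $L(s,\lambda^m)$ as a Mellin transform. Define
\[
\theta_m(t) = \sum_{\alpha \in \Z[i] \setminus\{0\}} \alpha^{4m} e^{-\pi t|\alpha|^2}, \qquad t > 0.
\]
Each nonzero ideal of $\Z[i]$ has exactly four generators $\set{\pm\alpha, \pm i\alpha}$, and since $i^{4m}=1$, the monomial $\alpha^{4m}$ is constant on generators of a fixed ideal. Collapsing the sum onto ideals and using the Mellin identity $\int_0^\infty t^{s+2m-1} e^{-\pi t|\alpha|^2}\, dt = \pi^{-s-2m} \Gamma(s+2m) |\alpha|^{-2s-4m}$ together with $\alpha^{4m}/|\alpha|^{4m} = \lambda^m((\alpha))$, I obtain, for $\Re s > 1$,
\[
\int_0^\infty t^{s+2m-1} \theta_m(t)\, dt = 4\pi^{-s-2m}\Gamma(s+2m) L(s, \lambda^m).
\]

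The crucial input is the theta transformation law. Treating $z^{4m}$ as a harmonic polynomial of degree $4m$ on $\R^2$ and invoking the standard rule that multiplication by a harmonic polynomial of degree $n$ in $d$ variables twists the Fourier transform of a Gaussian by $(-i)^n$ times the same polynomial, one computes
\[
\widehat{z^{4m} e^{-\pi t|z|^2}}(w) = t^{-4m-1}\, w^{4m}\, e^{-\pi |w|^2/t}.
\]
Poisson summation on the self-dual lattice $\Z[i]\subset\R^2$ then yields $\theta_m(1/t) = t^{4m+1}\theta_m(t)$. Splitting the Mellin representation at $t=1$ and transforming the $[0,1]$ piece via $t \mapsto 1/t$ using this identity, I get
\[
4\pi^{-s-2m}\Gamma(s+2m) L(s, \lambda^m) = \int_1^\infty \pth{t^{s+2m-1} + t^{2m-s}}\theta_m(t)\, dt.
\]
For $m\neq 0$, $\theta_m(t)$ decays exponentially as $t\to\infty$, so the right-hand side is entire in $s$ and manifestly invariant under $s\mapsto 1-s$; this simultaneously provides the analytic continuation of $L(s,\lambda^m)$ to all of $\C$ and, after collecting the $\pi$- and $\Gamma$-factors, yields \eqref{eq:FunctionalEquation} with the stated $\gamma(s,\lambda^m)$. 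For $m=0$ the $\alpha=0$ term forces us to apply Poisson to $\vartheta(t) = 1+\theta_0(t)$, and the resulting boundary terms produce a correction of the shape $1/(s-1) - 1/s$, giving the simple pole at $s=1$; the residue $\pi/4$ is read off either from tracking this constant or directly from the analytic class number formula for $\Q(i)$ with $h=1$, $w=4$, $R=1$, $|D|=4$.

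The main obstacle, such as there is one, is the Fourier-transform identity for $z^{4m}e^{-\pi t|z|^2}$ that powers the theta transformation; everything downstream of that is essentially formal Mellin-transform bookkeeping.
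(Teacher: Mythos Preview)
Your proof is correct. The paper itself does not actually prove this lemma---it merely declares the results standard and refers to a textbook---so your Hecke theta-function argument supplies considerably more detail than the paper does; it is precisely the classical proof one would find in such a reference.
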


These results are standard. See \citep{ANT}, for instance. We will need several results on the behavior of these functions in the critical strip. These are given in the following pair of lemmas.

\begin{lem}\label{lem:ZeroFreeRegion}
Let $V = (4m^2 + t^2)^{1/2}$. Then there exist absolute constants $C,\delta>0$ such that
\[
L(\sigma + it,\lambda^m) \ll V^{c(1-\sigma)^{3/2}}(\log V)^{2/3},
\]
uniformly for $1 - \delta < \sigma < 1$. It follows that there exists an absolute constant $C>0$ such that $L(s,\lambda^m)$ has no zeros in the region
\begin{equation}\label{eq:ZeroFree}
\sigma \geq 1 - C(\log V)^{-2/3}(\log\log V)^{-1/3}.
\end{equation}
\end{lem}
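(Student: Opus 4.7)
The plan is to follow the classical Vinogradov--Korobov argument, adapted to the setting of Hecke $L$-functions over $\Z[i]$. The growth bound in the critical strip is the hard analytic input, and the zero-free region then follows by the standard Hadamard--de la Vall\'ee Poussin deduction.

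\textbf{Growth bound.} I would start by using an approximate functional equation (or Mellin inversion against a smooth cutoff, followed by contour shifting against the gamma factor $\gamma(s,\lambda^m)$, whose size is controlled by Stirling's formula) to reduce $L(\sigma+it,\lambda^m)$ for $\sigma$ close to $1$ to a short smoothed sum essentially of the form
\[
\sum_{\alpha \in \Z[i]} \fracp{\alpha}{\abs{\alpha}}^{4m} N(\alpha)^{-\sigma-it} f\pth{N(\alpha)/X}
\]
for some smooth $f$ and $X \ll V^{C}$. Writing $\alpha$ in polar form, the phase is essentially $4m\arg\alpha - t\log N(\alpha)$, producing a two-dimensional exponential sum with both angular and radial twists, whose combined oscillation scales precisely like $V = (4m^2+t^2)^{1/2}$. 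One then applies Vinogradov's mean value theorem to bound this exponential sum, leading after partial summation to the stated estimate $L(\sigma+it,\lambda^m) \ll V^{C(1-\sigma)^{3/2}}(\log V)^{2/3}$ in a thin strip $1-\delta<\sigma<1$.

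\textbf{Zero-free region.} Having the growth bound, one applies the classical trigonometric inequality $3 + 4\cos\theta + \cos 2\theta \geq 0$ to $-\Re(L'/L)(\sigma+ik\gamma,\lambda^{km})$ for $k=0,1,2$, yielding
\[
-3\frac{L'}{L}(\sigma,\lambda^0) - 4\Re\frac{L'}{L}(\sigma+i\gamma,\lambda^m) - \Re\frac{L'}{L}(\sigma+2i\gamma,\lambda^{2m}) \geq 0
\]
for $\sigma > 1$. Exploiting the simple pole of $L(s,\lambda^0)$ at $s=1$, a hypothetical zero $\beta+i\gamma$ of $L(s,\lambda^m)$ with $\beta$ close to $1$, and the growth bound from the first step (applied via the Borel--Carath\'eodory theorem to convert it into an upper bound on $\Re(L'/L)$ at nearby points), one obtains a contradiction unless $\beta$ lies below the stated boundary. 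Optimizing the parameters in the standard way produces the precise shape $1 - C(\log V)^{-2/3}(\log\log V)^{-1/3}$.

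\textbf{Main obstacle.} The main difficulty is the growth estimate: the underlying exponential sum is genuinely two-parameter, with both $m$ and $t$ contributing independently to the oscillation, and one must handle the interaction of the angular twist $(\alpha/\abs{\alpha})^{4m}$ and the norm twist $N(\alpha)^{-it}$ uniformly in the region $4m^2+t^2 \asymp V^2$. This is what forces the Vinogradov bound to be applied in a two-dimensional setting and what justifies the joint definition of $V$. Once the exponential sum is under control, the deduction of the zero-free region is essentially a routine adaptation of the classical argument to the Hecke character setting.
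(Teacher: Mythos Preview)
Your sketch is a reasonable outline of the Vinogradov--Korobov method adapted to Hecke $L$-functions over $\Z[i]$, and the two-step structure (growth bound via Vinogradov's mean value theorem on a two-parameter exponential sum, then the $3+4\cos\theta+\cos 2\theta$ deduction using the character squaring $\lambda^m \mapsto \lambda^{2m}$) is the right framework. However, the paper does not carry out any of this: its entire proof is a citation, stating that both the growth bound and the zero-free region are special cases of Theorems 1 and 2 of Coleman's paper \citep{ColemanZeroFree}. So you are proposing to reprove from scratch what the paper simply quotes. Your approach buys self-containment and makes clear why the parameter $V=(4m^2+t^2)^{1/2}$ is the natural analytic conductor; the paper's approach buys brevity and avoids the substantial technical work of executing Vinogradov's method in the two-dimensional setting, which is nontrivial and is precisely the content of Coleman's paper.
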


\begin{lem}\label{lem:CritStripEstimates}
For $\sigma$ in the region {\normalfont (\ref{eq:ZeroFree})}, we have
\[
\frac{L'(\sigma+it,\lambda^m)}{L(\sigma+it,\lambda^m)} \ll \log V, \qquad \frac{1}{L(\sigma+it,\lambda^m)} \ll \log V.
\]
\end{lem}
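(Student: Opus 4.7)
My plan is to obtain both estimates from Lemma~\ref{lem:ZeroFreeRegion} via the classical Hadamard--Borel--Carath\'{e}odory toolkit. Write $L_V := (\log V)^{-2/3}(\log\log V)^{-1/3}$ for the depth of the zero-free region.

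For the $L'/L$ bound, I would start from the standard partial fraction identity
\[
\frac{L'}{L}(s, \lambda^m) = \sum_{|\Im \rho - t| \leq 1} \frac{1}{s - \rho} + O(\log V),
\]
valid for $-1 \leq \Re s \leq 3$, where $\rho$ ranges over nontrivial zeros of $L(\cdot, \lambda^m)$. This comes from logarithmically differentiating the Hadamard product (applicable since $L(s, \lambda^m)$ has order $1$), using Stirling on the gamma factors in (\ref{eq:FunctionalEquation}), subtracting off the value at $s_0 = 2+it$ (where $L'/L$ is $O(1)$ by absolute convergence), and invoking the standard zero-counting estimate $N(T+1, \lambda^m) - N(T, \lambda^m) \ll \log V$ to collapse the sum over distant zeros. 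For $m = 0$ the pole at $s = 1$ contributes an additional $1/(s-1) \ll 1/L_V \ll \log V$, which fits in the error. For $\sigma \geq 1 - (C/4)L_V$, Lemma~\ref{lem:ZeroFreeRegion} forces $|s - \rho| \geq (3C/4)L_V$ for each surviving zero, and a dyadic decomposition of these zeros by $|\Im \rho - t|$, combined with the zero-counting bound in each annulus, yields $\sum 1/|s-\rho| \ll \log V$.

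For the $1/L$ bound, I would apply the Borel--Carath\'{e}odory inequality to an analytic branch of $\log L(\cdot, \lambda^m)$ on a disc $|w - s_0| \leq R$ centered at $s_0 := 1 + \eta + it$ with $\eta := (C/2)L_V$ and $R \asymp L_V$, chosen so that the disc contains the target $s$ while lying strictly inside the zero-free region. The key numerical observation is that $(1-\Re w)^{3/2} \log V = o(1)$ throughout the disc, so the upper bound in Lemma~\ref{lem:ZeroFreeRegion} gives $\log|L(w, \lambda^m)| \ll \log\log V$ uniformly, while the Euler product at $s_0$ gives $|L(s_0, \lambda^m)| \gg L_V$, i.e.\ $\log|L(s_0, \lambda^m)| \gg -\log\log V$. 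Borel--Carath\'{e}odory then converts this one-sided upper bound on $\Re \log L$ into a two-sided bound $|\log L(w, \lambda^m)| \ll \log\log V$ on the concentric half-radius disc, which after exponentiation and absorbing constants yields $1/|L(s, \lambda^m)| \ll \log V$.

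The main technical obstacle will be balancing the radii: the disc must contain every target $s$ with $\sigma \geq 1 - (C/4)L_V$ yet remain strictly inside the zero-free region defined with constant $C$, and the Borel--Carath\'{e}odory constants must be tight enough to produce the exponent $1$ in the $1/L$ bound rather than a larger power of $\log V$. The factor of $1/4$ versus $1$ in the two regions provides exactly the slack needed.
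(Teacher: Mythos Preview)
Your partial-fraction route for $L'/L$ does not reach $\log V$. With only the standard count $N(T{+}1,\lambda^m)-N(T,\lambda^m)\ll\log V$, the method gives
\[
\sum_{|\gamma-t|\leq 1}\frac{1}{|s-\rho|}\ \ll\ \frac{\log V}{L_V}\ =\ (\log V)^{5/3}(\log\log V)^{1/3},
\]
not $\log V$: the zeros with $|\gamma-t|\leq L_V$ may number as many as $c\log V$, each at distance only $\asymp L_V$ from $s$, and dyadic decomposition in $|\gamma-t|$ does not thin this block out, since the count in every dyadic shell of height $\leq 1$ is still $O(\log V)$. (This is the same phenomenon that makes the partial-fraction method give $\zeta'/\zeta\ll(\log t)^2$, rather than $\log t$, in the classical zero-free region.) The paper instead applies a Borel--Carath\'eodory variant---Titchmarsh's Lemma~$\gamma$---directly to $L$ on a disc of radius $r\asymp((\log\log V)/\log V)^{2/3}$ centred just to the right of $\Re s=1$. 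The decisive input, which your $L'/L$ argument never uses, is the \emph{polylogarithmic} upper bound $|L|\ll(\log V)^{A}$ from Lemma~\ref{lem:ZeroFreeRegion} on that disc: it makes the Lemma~$\gamma$ parameter $M=O(\log\log V)$ rather than $O(\log V)$, and then $L'/L\ll M/r'\ll(\log V)^{2/3+o(1)}\ll\log V$ on a zero-free subdisc of radius $r'\asymp L_V$. You do invoke exactly this polylog bound in your $1/L$ paragraph; the fix is to feed it into the $L'/L$ estimate instead.

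Your direct Borel--Carath\'eodory argument for $1/L$ also falls short of the stated exponent. A zero-free disc centred at $1+(C/2)L_V+it$ is forced to have radius $R\leq(3C/2)L_V$, while the target $\sigma=1-(C/4)L_V$ sits at distance $(3C/4)L_V$, so $R'/R\geq\tfrac12$ and the Borel--Carath\'eodory amplification $2R'/(R-R')$ is at least $2$; combined with $|\log L(s_0)|\asymp\log(1/\eta)\asymp\log\log V$ this yields only $1/|L|\ll(\log V)^{C_0}$ with $C_0>1$. The $C/4$-versus-$C$ slack arranges the disc geometry but cannot shrink these numerical constants. The paper sidesteps this by integrating the already-obtained bound $L'/L\ll\log V$ over the horizontal segment from $1+\eta+it$ to $s$, of length $O(L_V)$, obtaining $\log|1/L|\leq\log\zeta_K(1+\eta)+O(L_V\cdot\log V)=\log(1/\eta)+O(1)$ and hence $1/|L|\ll 1/\eta\ll\log V$.
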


Lemma \ref{lem:ZeroFreeRegion} follows from Theorems 1 and 2 of \citep{ColemanZeroFree}, and the proof of Lemma \ref{lem:CritStripEstimates} follows closely the proof of Theorem 3.11 of \citep{Titchmarsh1986}. Next, we need an estimate for the number of lattice points in a suitably regular sector. 

\begin{lem}\label{lem:RegularLatticePointBound}
Let $\phi\in\R$, $x$ and $y$ be sufficiently large with $x^{1/2} \leq y \leq x$, and  $x^{-1/2}\leq \delta \leq \pi/2$. If
\[
\coln(x,y,\phi,\delta) = \#\set{a+bi\in\Z[i] : \phi\leq\arg(a+bi) \leq\phi+\delta,\ x-y\leq a^2+b^2 \leq x},
\]
then $\coln(x,y,\phi,\delta) \ll \delta y$.
\end{lem}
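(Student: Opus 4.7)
The plan is to bound $\coln(x,y,\phi,\delta)$ by comparing it to the area of the annular sector in question, with the error controlled by the perimeter. Set $R = \sqrt{x}$ and $r = \sqrt{x-y}$, and let $K$ denote the closed annular sector $\{z \in \C : r \le |z| \le R,\ \phi \le \arg z \le \phi+\delta\}$. A direct computation in polar coordinates gives $\mathrm{area}(K) = \tfrac{1}{2}\delta(R^2-r^2) = \tfrac{1}{2}\delta y$. The boundary of $K$ consists of two radial segments of length $R-r = y/(R+r) \le y/\sqrt{x}$ together with two circular arcs of combined length $\delta(R+r) \le 2\delta\sqrt{x}$, and hence has total length $\ll \delta\sqrt{x} + y/\sqrt{x}$.

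To each lattice point $a+bi \in K$ I would associate the closed unit square $Q_{a,b}$ centered at $a+bi$. These squares have pairwise disjoint interiors, and since each point of $Q_{a,b}$ lies within distance $\sqrt{2}/2$ of the center $a+bi \in K$, each $Q_{a,b}$ is contained in the $(\sqrt{2}/2)$-neighborhood $K^{\sqrt{2}/2}$ of $K$. Consequently
\[
\coln(x,y,\phi,\delta) \;=\; \sum_{a+bi \in K \cap \Z[i]} \mathrm{area}(Q_{a,b}) \;\le\; \mathrm{area}\pth{K^{\sqrt{2}/2}}.
\]
I would then invoke the standard tubular-neighborhood bound $\mathrm{area}(K^{\sqrt{2}/2}) \le \mathrm{area}(K) + O(\mathrm{length}(\partial K) + 1)$, which is easily verified here by expanding each of the four boundary pieces separately and noting that each resulting strip has area bounded by a constant times the length of that piece.

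Combining the steps above yields
\[
\coln(x,y,\phi,\delta) \;\ll\; \delta y + \delta\sqrt{x} + \frac{y}{\sqrt{x}} + 1.
\]
The hypothesis $y \ge \sqrt{x}$ yields $\delta\sqrt{x} \le \delta y$, the hypothesis $\delta \ge 1/\sqrt{x}$ yields $y/\sqrt{x} \le \delta y$, and together they force $\delta y \ge 1$; each error term is therefore absorbed into the main term, giving the claimed bound. The only real point of care will be the tubular-neighborhood estimate when $r$ is very small or zero, so that the inner arc degenerates and $K$ reduces to a pie slice; a direct check shows that the same boundary-by-boundary expansion still applies, so no separate treatment will be needed.
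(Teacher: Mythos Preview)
Your proof is correct and follows essentially the same approach as the paper: compute the area $\tfrac{1}{2}\delta y$ and perimeter $\ll \delta\sqrt{x} + y/\sqrt{x}$ of the annular sector, use the classical ``lattice points $\ll$ area $+$ perimeter'' bound, and then absorb the perimeter terms using the hypotheses $y \ge \sqrt{x}$ and $\delta \ge x^{-1/2}$. The only difference is cosmetic: you spell out the area-plus-perimeter bound via unit squares and a tubular neighborhood, whereas the paper simply invokes it as a standard fact.
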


\begin{lem}\label{lem:DivisorBound}
Let $d_j(\ida)$ be the $j$-divisor function for $\Z[i]$. We have $d_j(\ida) \ll (N\ida)^\ep$, and for $y > x^{1/2}$ we also have
\[
\sum_{x - y < N\ida \leq x} d_j(\ida) \ll y(\log x)^{j-1}.
\]
For $\phi \in \R$ and $x^{-1/2} < \delta \leq \pi/2$, we also have
\[
\sum_{\substack{x-y< N\ida \leq x\\ \phi \leq \arg \ida \leq \phi+\delta}} d_j(\ida) \ll \delta y x^{\ep},
\]
The implied constants above depend only on $\ep$ and $j$. 
\end{lem}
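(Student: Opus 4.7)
The plan is to handle the three assertions in order, with the bulk of the work going into the short-interval $d_j$-sum.

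For the pointwise bound $d_j(\ida) \ll N(\ida)^\ep$, I would exploit the multiplicativity of $d_j$ together with the polynomial growth $d_j(\idp^k) = \binom{k+j-1}{j-1}$: the standard prime-by-prime comparison with $N(\idp)^{k\ep}$, separating small primes from large, gives the bound with implied constant depending only on $\ep$ and $j$. For the sectoral sum, this pointwise bound combines immediately with Lemma~\ref{lem:RegularLatticePointBound} to yield
\[
\sum_{\substack{x-y<N(\ida)\leq x\\ \phi\leq \arg\ida \leq \phi+\delta}} d_j(\ida) \ll x^\ep\, \coln(x,y,\phi,\delta) \ll \delta y x^\ep.
\]

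For the unsectored short-interval bound $\sum_{x-y<N(\ida)\leq x} d_j(\ida) \ll y(\log x)^{j-1}$, I would induct on $j$. The base case $j = 1$ is Lemma~\ref{lem:RegularLatticePointBound} with $\delta = \pi/2$. For the inductive step, write $d_j = 1 * d_{j-1}$ so that
\[
\sum_{x-y<N(\ida)\leq x} d_j(\ida) = \sum_\idb \sum_{\substack{\idc \\ (x-y)/N(\idb) < N(\idc) \leq x/N(\idb)}} d_{j-1}(\idc),
\]
and split on $N(\idb)$. For $N(\idb) \leq y^2/x$ the inner $\idc$-interval has length at least the square root of its center, so the inductive hypothesis bounds the inner sum by $\ll (y/N(\idb))(\log x)^{j-2}$, and summing on $\idb$ produces the desired $\ll y(\log x)^{j-1}$ via $\sum_{N(\idb) \leq y^2/x}1/N(\idb) \ll \log x$. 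The complementary range $N(\idb) > y^2/x$ is the main obstacle: the inner interval is too short for the inductive hypothesis to apply, and a naive Gauss-circle argument produces an error of order $x^{(j+1)/(2j)} > x^{1/2}$, which swamps our target when $y$ is near $x^{1/2}$.

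To dispose of that range I would either iterate the same hyperbola identity (peeling $d_{j-1} = 1 * d_{j-2}$, and so on, until every resulting sub-interval is long enough to feed back into the induction) or, more directly, invoke a Shiu-type short-interval divisor estimate in $\Z[i]$, which yields $\sum_{x'-y'<N(\ida)\leq x'} d_k(\ida) \ll y'(\log x')^{k-1}$ whenever $y' \geq (x')^\alpha$ for any fixed $\alpha > 0$. Either mechanism supplies the logarithmic power savings (rather than merely $x^\ep$) needed to close the induction.
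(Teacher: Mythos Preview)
Your treatment of the pointwise bound and the sectored sum matches the paper exactly. For the unsectored short-interval sum, your approach is sound but more circuitous than the paper's, and your option~(b) essentially collapses to what the paper does, making the induction superfluous.

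The paper's device is to pass from ideals to integers at the outset: set
\[
f_j(n) = \sum_{N(\ida)=n} d_j(\ida),
\]
so that the sum in question becomes $\sum_{x-y<n\leq x} f_j(n)$. Since $f_j = r * f_{j-1}$ with $r(n)\leq d(n)$, one gets $f_j(n)\leq d_j^{\Z}(n)$, which both recovers the pointwise bound and shows that $f_j$ lies in Shiu's class $M$ of nonnegative multiplicative functions. Shiu's theorem over $\Z$ then applies verbatim, yielding
\[
\sum_{x-y<n\leq x} f_j(n) \ll \frac{y}{\log x}\exp\Bigl(\sum_{p\leq x}\frac{f_j(p)}{p}\Bigr),
\]
and the explicit values $f_j(p)=2j$ for $p\equiv 1\pmod 4$, $f_j(p)=0$ for $p\equiv 3\pmod 4$ make the exponential $\ll (\log x)^j$. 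No induction, no hyperbola splitting, and no $\Z[i]$-specific Shiu theorem are needed.

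Your inductive hyperbola argument, by contrast, runs into the short-tail range $N(\idb)>y^2/x$ and then has to appeal to a Shiu-type bound to finish---but that bound is already the full statement you want, so the induction adds nothing. Your option~(a), iterating the convolution, is left vague; it is not obvious that the repeated peeling produces the clean exponent $j-1$ rather than a larger power of $\log x$, and making it precise would be more work than the paper's direct route.
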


The proof of Lemma \ref{lem:RegularLatticePointBound} is straightforward, and Lemma \ref{lem:DivisorBound} follows from  Shiu's work \citep{Shiu}. Our analysis makes use of an analogue of Heath-Brown's identity in $\Z[i]$ (see \citep{HeathBrown}). For technical reasons, it is more convenient to have a smoothed version of this identity. As such, let $W$ be a smooth function supported on $[\frac{1}{2},2]$ such that
\[
\sum_{n \geq 0} W(2^n t) = 1 \mand W^{j}(t) \ll t^{-j}
\]
for all $0< t \leq 1$. Then we have the following

\begin{lem}[Heath-Brown's Identity]\label{lem:HeathBrown}
Let $X > 1$ and $J$ be a positive integer, and let $W$ be as above. Then for any $\ida$ with $N\ida \leq X^J$, we have
\[
\begin{aligned}
\Lambda(\ida) &= \sum_{j=1}^{J} \binom{J}{j}(-1)^{j-1} \sum_{\ida_1\cdots \ida_{2J}=\ida} \log (\ida_1) \mu(\ida_{J+1})\cdots \mu(\ida_{2J}) \\
&\phantom{====}\bigtimes\sum_{\substack{n_1,\ldots,n_{j}\geq 0\\ n_{J+1},\ldots,n_{J+j}\geq 0}} W\fracp{N\ida_{1}}{X^J/2^{n_1}} \cdots W\fracp{N\ida_{j}}{X^J/2^{n_j}}W\fracp{N\ida_{J+1}}{X/2^{n_{J+1}}}\cdots W\fracp{N\ida_{J+j}}{X/2^{n_{J+j}}} \\
&\phantom{====}\bigtimes W(N\ida_{j+1})\cdots W(N\ida_{J}) W(N\ida_{J+j+1}) \cdots W(N\ida_{2J}).
\end{aligned}
\]

\end{lem}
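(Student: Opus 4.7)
The plan is to derive the smoothed identity by first recalling the classical (unsmoothed) Heath-Brown identity in $\Z[i]$, and then inserting dyadic partitions of unity in each factor to recover the weights in the statement. The starting point would be an algebraic identity in Dirichlet series. Let $\zeta(s) = \sum_\ida N(\ida)^{-s}$ denote the Dedekind zeta function of $\Z[i]$ and let
\[
\colm(s) = \sum_{N(\idb)\leq X} \frac{\mu(\idb)}{N(\idb)^s}
\]
be the truncated Möbius inverse. Because $\zeta(s)\colm(s) - 1$ has Dirichlet coefficients supported on $N(\ida) > X$, the $J$-th power $(1 - \zeta(s)\colm(s))^J$ has coefficients supported on $N(\ida) > X^J$. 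Multiplying the identity $1 = [1 - (1-\zeta\colm)^J] + (1-\zeta\colm)^J$ by $-\zeta'/\zeta$, expanding by the binomial theorem, and comparing coefficients of $N(\ida)^{-s}$ for $N(\ida) \leq X^J$, one would obtain the classical $\Z[i]$-analogue of the identity in \citep{HeathBrown}:
\[
\Lambda(\ida) = \sum_{j=1}^J \binom{J}{j}(-1)^{j-1} \sum_{\substack{\ida_1\cdots \ida_j\idb_1\cdots\idb_j = \ida\\ N(\idb_i) \leq X}} \log N(\ida_1)\,\mu(\idb_1)\cdots\mu(\idb_j),
\]
where the factors $\ida_2, \ldots, \ida_j$ are unrestricted and contribute trivial weight.

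To pass to the smoothed version, I would insert a dyadic partition of unity at each of the $2j$ factors. The unrestricted factors $\ida_1,\ldots,\ida_j$ automatically satisfy $N(\ida_i) \leq N(\ida) \leq X^J$, so one can introduce the weight $\sum_{n_i \geq 0} W(N(\ida_i)/(X^J/2^{n_i}))$, which by the partition-of-unity property equals $1$ on integer-valued norms in $[1,X^J]$. Analogously, the sharp cutoff $\mathbf{1}_{N(\idb_i) \leq X}$ would be replaced by $\sum_{n_i \geq 0} W(N(\idb_i)/(X/2^{n_i}))$. To match the $2J$-factor structure of the lemma, I would then pad each factorization with $J - j$ extra factors on each side, forcing them to equal the unit ideal via the weight $W(N(\cdot))$, which is designed so that $W(1)=1$ and $W(m)=0$ on integers $m \geq 2$. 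Relabeling the Möbius factors $\idb_i$ as $\ida_{J+i}$ then produces the stated formula.

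The main technical point will be verifying that the inserted weights are \emph{exact} on integer-valued norms, so that the smoothing preserves the underlying Dirichlet-series identity rather than merely approximating it. This reduces to the dyadic partition-of-unity property of $W$ together with the boundary conditions $W(1)=1$ and $W(m)=0$ for integer $m \geq 2$; granting these properties of $W$, the remainder of the argument is a combinatorial bookkeeping exercise matching one log factor, $J - 1$ padded-or-smoothed ones, and $J$ padded-or-smoothed Möbius factors to the factorization of $\ida$ into $2J$ ideals.
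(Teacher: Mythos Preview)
Your plan has a genuine gap in the treatment of the M\"obius factors. You propose to first derive the classical unsmoothed identity (with sharp cutoffs $N(\idb_i)\le X$) and then ``replace'' each sharp cutoff by the smooth weight $\sum_{n\ge 0}W\bigl(N(\idb_i)/(X/2^n)\bigr)$. This replacement is \emph{not} an identity. The partition-of-unity property only gives $\sum_{n\ge 0}W(k/(X/2^n))=1$ for $1\le k\le X$; for $X<k\le 2X$ the sum collapses to the single term $W(k/X)$, which is generically nonzero since any smooth $W$ forming a dyadic partition of unity must have support overlapping $(1,2)$. Your appeal to ``$W(m)=0$ for integers $m\ge 2$'' does not help here, because the argument $k/X$ is not an integer. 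So smoothing the $\idb_i$-factors after the fact introduces genuine extra terms, and the claim that ``the inserted weights are exact on integer-valued norms'' is false for these factors.

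The paper's proof sidesteps this by working with the smoothed M\"obius series
\[
M_X(s)=\sum_{n\ge 0}\sum_{\idb}\mu(\idb)N(\idb)^{-s}\,W\!\fracp{N(\idb)}{X/2^n}
\]
from the outset, plugging $M_X$ (not the sharp $\colm$) directly into the binomial identity for $-\zeta_K'/\zeta_K$. The key point is that although $M_X$ differs from $\colm$ on ideals with $X<N(\idb)\le 2X$, its Dirichlet coefficients still equal $\mu(\idb)$ for \emph{all} $\idb$ with $N(\idb)\le X$. That alone suffices to conclude that $1-M_X(s)\zeta_K(s)$ has coefficients supported on $N(\ida)>X$, hence $(1-M_X\zeta_K)^J$ contributes nothing for $N(\ida)\le X^J$, and the lemma follows by comparing coefficients. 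In short: the smoothing must go into the Dirichlet-series identity \emph{before} the binomial expansion, not be grafted on afterwards.
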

Note that the terms on the last line simply force the ideals $\ida_{j+1},\ldots$ to have norm 1. The point of the lemma is that for $N\ida \leq X^J$, the function $\Lambda(\ida)$ can be decomposed into a linear combination of $O((\log X)^{2J})$ smooth sums of the form
\[
\sum_{\ida_1\cdots \ida_{2J}=\ida} \log (\ida_1) \mu(\ida_{J+1})\cdots \mu(\ida_{2J}) W\fracp{N\ida_1}{N_1} \cdots W\fracp{N\ida_{2J}}{N_{2J}},
\]
where $N_j = X^J/2^n$ or $X/2^n$ for some integer $n$, depending as $j\leq J$.

\section{Initial Decomposition}\label{sec:InitialDecomposition}
To estimate $\psi(x,x^\theta;\phi,\delta)$, we begin by smoothing the angular region for $\ida$. For this, we need 

\begin{lem}\label{lem:TestFunction}
Let $k\in\Z$ with $k\geq 0$ and let $\alpha,\beta,\Delta,L$ be real numbers satisfying
\[
L > 0, \qquad 0 < \Delta < \frac{L}{2}, \qquad \Delta \leq \beta-\alpha \leq L - \Delta.
\]
Then there exists an $L$-periodic function $P(x)$ with
\[
P(t) = \frac{1}{L}(\beta-\alpha) + \sum_{m\neq 0} c_m e^{4imt}
\]
which satisfies
\[
\begin{aligned}
P(t) &= 1 &&\text{if}\ t \in [\alpha,\beta], \\
P(t) &= 0 &&\text{if}\ t \in [\beta+\Delta,L+\alpha-\Delta] , \\
P(t) &\in [0,1] && \text{for all $t$,}
\end{aligned}
\]
and where the coefficients $c_m$ satisfy
\begin{equation}\label{eq:c_mBounds}
\abs{c_m} \leq \begin{cases}
\dfrac{1}{L}(\beta-\alpha), & \text{} \\[1em]
\dfrac{L}{\abs{m}} \dfracp{kL}{\Delta \abs{m}}^k & \text{if $m \neq 0$},
\end{cases}
\end{equation}
where the factor involving $k$ is taken to equal 1 when $k=0$.
\end{lem}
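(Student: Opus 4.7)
The natural approach is to construct $P$ explicitly as the $L$--periodic convolution of the indicator of an interval of length $\beta-\alpha$ with a smooth, nonnegative bump of width $\Delta$, and then extract the Fourier coefficients by the convolution theorem.

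First I would fix a Vinogradov--type bump $\eta=\eta_{\Delta}$: a smooth, nonnegative function supported in $[-\Delta/2,\Delta/2]$ with $\int\eta=1$ and with the family of derivative bounds
\[
\|\eta^{(k)}\|_{1}\ll\pth{\tfrac{Ck}{\Delta}}^{k}\qquad(k=0,1,2,\ldots).
\]
Such bumps are classical and can be obtained, for example, as a normalized $k$--fold convolution of the indicator of $[-\Delta/(2k),\Delta/(2k)]$, or via the Beurling--Selberg machinery (see, e.g., Iwaniec--Kowalski). I would then let $\chi$ be the $L$--periodic extension of the indicator function of a suitable interval calibrated so that $\chi*\eta$ has integral exactly $\beta-\alpha$ on one period, and set $P=\chi*\eta$.

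With this choice, the three shape properties are immediate. Since $\eta$ has total mass one and width at most $\Delta$, the convolution equals $1$ throughout $[\alpha,\beta]$ and vanishes throughout $[\beta+\Delta,L+\alpha-\Delta]$ by the disjointness of supports, while $0\leq P\leq 1$ follows from $0\leq\chi,\eta$ and $\chi\leq 1$. Expanding $P$ in its Fourier series on $[0,L]$ yields $c_{0}=(\beta-\alpha)/L$ by construction, while for $m\neq 0$ the convolution theorem gives $c_{m}=L^{-1}\hat{\chi}(m)\hat{\eta}(m)$ in an appropriate normalization. The first bound $|c_{m}|\leq(\beta-\alpha)/L$ comes from the trivial estimate $|c_{m}|\leq L^{-1}\|P\|_{1}$, together with the support of $P$ and the hypothesis $\Delta\leq\beta-\alpha$. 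The second bound is obtained by $k$ integrations by parts in the Fourier integral: moving all $k$ derivatives onto $\eta$ yields a factor $\|\eta^{(k)}\|_{1}\cdot(L/|m|)^{k}\ll(kL/\Delta|m|)^{k}$, while one additional integration by parts on the $\chi$ side (equivalently, the bound $|\hat{\chi}(m)|\ll L/|m|$) contributes the extra factor $L/|m|$.

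The main obstacle is purely technical rather than conceptual: one must assemble the Vinogradov--type bump with the stated derivative bounds, and then tune absolute constants -- in particular the precise interval used for $\chi$ and the exact construction of $\eta$ -- so that the two inequalities in (\ref{eq:c_mBounds}) come out with constant $1$ on the right-hand sides rather than a larger absolute constant. The other steps (support analysis, integration by parts, the identification of the constant term) are routine once the bump is in hand.
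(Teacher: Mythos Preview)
The paper does not actually prove this lemma; it simply cites Lemma~A of Chapter~1, Section~2 of Karatsuba and remarks that the case $L=1$ proved there generalizes to arbitrary periods. Your convolution of a periodized indicator with a Vinogradov-type bump is exactly the classical construction underlying that reference, so your plan matches the cited proof.

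One small inconsistency in your sketch is worth flagging: you calibrate $\chi$ so that $\int_0^L P=\beta-\alpha$ (hence $c_0=(\beta-\alpha)/L$) and simultaneously claim that $P=1$ on all of $[\alpha,\beta]$ with $0\le P\le 1$. These are incompatible for a continuous $P$: if $0\le P\le 1$ and $P\equiv 1$ on $[\alpha,\beta]$, then $\int_0^L P>\beta-\alpha$ unless $P$ is the indicator itself. In the standard construction one either takes $\chi=\mathbf{1}_{[\alpha-\Delta/2,\,\beta+\Delta/2]}$, which gives $P=1$ on $[\alpha,\beta]$ at the cost of $c_0=(\beta-\alpha+\Delta)/L$, or takes $\chi=\mathbf{1}_{[\alpha,\beta]}$, which gives $c_0=(\beta-\alpha)/L$ at the cost of $P=1$ only on $[\alpha+\Delta/2,\,\beta-\Delta/2]$. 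Either version suffices for the application in Section~\ref{sec:InitialDecomposition}, since $\Delta=\delta x^{-\varepsilon}$ there and the $O(\Delta)$ discrepancy is absorbed into the $o(1)$ term. This tension is already present in the lemma as stated, so it is not a defect of your approach.
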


This result is classical. See, for example, Lemma A of Chapter 1, Section 2 of \citep{Karatsuba}. The special case $L = 1$ is proved there, but the lemma generalizes easily to arbitrary periods.

Let $P$ be as in the lemma with $L = \frac{\pi}{2}$, $\alpha = \phi$, $\beta = \phi+\delta$, and $\Delta = \delta x^{-\ep}$. Then
\[
\begin{aligned}
\psi(x,x^\theta;\phi,\delta) &= \sum_{x-x^\theta < N\ida \leq x} \Lambda(\ida) P(\arg \ida) + O\sumpth{\sum_{\substack{x-x^\theta < N\ida \leq x\\ \phi-\Delta\leq \arg\ida \leq \phi}} \Lambda(\ida)} + O\sumpth{\sum_{\substack{x-x^\theta < N\ida \leq x\\ \phi+\delta \leq \arg\ida \leq \phi+\delta+\Delta}} \Lambda(\ida)}.
\end{aligned}
\]
To estimate the error terms we note that the hypotheses of Theorem \ref{thm:Asymptotic} imply that $x^\theta \gg x^{7/10+\ep}$ and $\delta \gg x^{-3/10+\ep}$. In particular, we have
\[
x^{\theta} \geq  x^{1/2} \mand \Delta \geq x^{-1/2}.
\]
Since $\Lambda(\ida) \leq \log x$, we have by Lemma \ref{lem:RegularLatticePointBound} that
\[
\sum_{\substack{x-x^\theta < N\ida \leq x\\ \phi-\Delta\leq \arg\ida \leq \phi}} \Lambda(\ida) \ll (\log x) \coln(x,x^\theta,\phi-\Delta,\phi) \ll (\log x)x^\theta \Delta = o(\delta x^\theta),
\]
and similarly for the other error term. We expand $P(\arg\ida)$ using its Fourier series and write
\[
\sum_{x-x^\theta < N\ida \leq x} \Lambda(\ida) P(\arg \ida) = \sum_{x-x^\theta < N\ida \leq x} \Lambda(\ida) \sum_{m} c_m \lambda^m(\ida).
\]
We have
\[
\sum_{x-x^\theta < N\ida \leq x} \Lambda(\ida) = 2\sum_{\substack{x-x^\theta < p \leq x\\ p\equiv 1\mod{4}}} \log p + O(x^{1/2}\log x),
\]
(see, for instance, display 7.4 in Chapter 2 of \citep{RicciThesis} for this computation). Since $x^\theta \gg x^{7/12+\ep}$, the Siegel-Walfisz theorem in short intervals gives
\[
2\sum_{\substack{x-x^\theta < p \leq x\\ p\equiv 1\mod{4}}} \log p = x^\theta(1+o(1)),
\]
and since $c_0 = 2\delta \pi^{-1}$, we obtain
\[
\psi(x,x^\theta,\phi,\delta) = \frac{2\delta x^\theta}{\pi}(1+o(1)) + \sum_{x-x^\theta < N\ida \leq x} \Lambda(\ida) \sum_{m\neq 0} c_m \lambda^m(\ida).
\]
Using (\ref{eq:c_mBounds}), we truncate the Fourier series at $M_1$ to obtain
\[
\psi(x,x^\theta,\phi,\delta) = \frac{2\delta x^\theta}{\pi}(1+o(1)) + \sum_{x-x^\theta < N\ida \leq x} \Lambda(\ida) \sum_{1\leq \abs{m}\leq M_1} c_m \lambda^m(\ida) + O\sumpth{x^\theta \log x \fracp{\pi kx^\ep}{2\delta M_1}^k}
\]
for any $k \geq 1$. Choosing $M_1 = \delta^{-1} x^\ep$, a sufficiently large choice of $k$ depending only on $\ep$ makes the error term $o(\delta x^\theta)$, and so
\[
\psi(x,x^\theta,\phi,\delta) = \frac{2\delta x^\theta}{\pi}(1+o(1)) + \sum_{1\leq \abs{m}\leq M_1} c_m \sum_{x-x^\theta < N\ida \leq x} \Lambda(\ida) \lambda^m(\ida).
\]

Next, we smooth the norm-region for $\ida$. Let $V$ be a smooth function satisfying
\[
\begin{aligned}
V(t) &= 1 &&\text{if}\ t \in [x-x^\theta,x], \\
V(t) &= 0 &&\text{if}\ t \in \R\smallsetminus [x-x^\theta-x^{\theta-\ep},x+x^{\theta-\ep}], \\
V(t)&\in[0,1] &&\text{for all $t$.}
\end{aligned}
\]
Then $\tilde{V}$ satisfies
\begin{equation}\label{eq:VStripBound}
\tilde{V}(s) \ll x^{\theta+\sigma-1} \mand \tilde{V}(s) \ll \frac{x^{\sigma+(A-1)(1-\theta+\ep)}}{(1+\abs{t})^A}
\end{equation}
for any real $A \geq 1$, where the implied constant depends only on $A$ and $\sigma$. We obtain
\[
\psi(x,x^\theta,\phi,\delta) = \frac{2\delta x^\theta}{\pi}(1+o(1)) + \sum_{1\leq \abs{m}\leq M_1} c_m \sum_{\ida} \Lambda(\ida) V(N\ida) \lambda^m(\ida) = \frac{2\delta x^\theta}{\pi}(1+o(1)) + \cols,
\]
say, where the error in replacing the sharp cutoff with the smoothing function $V$ has been absorbed into the error term $o(\delta x^\theta)$.

We now employ Lemma \ref{lem:HeathBrown} with $X = (2x)^{1/J}$ for some integer $J \geq 1$ to be chosen. Then $\cols$ is a linear combination of $O((\log x)^{2J})$ sums of the form
\begin{equation}\label{eq:SigmaDef}
\begin{aligned}
S &= \sum_{1\leq \abs{m}\leq M_1} c_m \sum_{\ida=\ida_1\cdots\ida_{2J}} a_1(\ida_1)\cdots a_{2J}(\ida_{2J}) W_1(N\ida_1)\cdots W_{2J}(N\ida_{2J})\lambda^m(\ida) V(N\ida),
\end{aligned}
\end{equation}
where
\[
a_j(\ida) = \begin{cases}
\log N\ida & \text{if $j = 1$},\\
1 & \text{if $2\leq j \leq J$}, \\
\mu(\ida) & \text{if $J+1 \leq j \leq 2J$},
\end{cases}
\]
$W_{j}(k) = W(k/N_j)$, and $N_j = x/2^n$ or $X/2^n$ for some integer $n \geq 0$ depending as $j\leq J$.

It is natural to consider the Dirichlet series associated to the sums $S$. For each $j$ and $m$, put
\[
f_{j,m}(s) = \sum_{\ida} \frac{a_j(\ida)\lambda^m(\ida)W_j(N\ida)}{(N\ida)^s}
\]
and also let
\[
F_m(s) = \prod_{j=1}^{2J} f_{j,m}(s) = \sum_{\ida} \frac{a(\ida)\lambda^m(\ida)}{(N\ida)^s},
\]
where the coefficients satisfy
\[
\abs{a(\ida)} \ll d_{2J}(\ida)\log x.
\]
Then Mellin inversion gives
\[
S = \frac{1}{2\pi i} \int\limits_{(1/2)} \tilde{V}(s) \sum_{1\leq \abs{m}\leq M_1} c_m F_m(s)\ ds.
\]
For $\Re(s) = \frac{1}{2}$, we have
\[
F_m(s) \ll \log x\sum_{N\ida\leq 2x} \frac{d_{2J}(\ida)}{(N\ida)^{1/2}} \ll x^{1/2+\ep}.
\]
Also $\abs{c_m} \ll \delta$. Truncating the integral at height $T_1$ and using (\ref{eq:VStripBound}) then gives
\[
S = \frac{1}{2\pi i} \int_{1/2-iT_1}^{1/2+iT_1} \tilde{V}(s) \sum_{1\leq \abs{m}\leq M_1} c_m F_m(s)\ ds + O\pth{ x^{1/2+\ep}\frac{x^{1/2+(A-1)(1-\theta+\ep)}}{T_1^{A-1}}}
\]
for any $A \geq 1$. Choosing $T_1 = x^{1-\theta+\ep}$ and taking $A$ sufficiently large in terms of $\ep$ makes the error term negligible. We have $\abs{c_m} \ll \delta $ and $|\tilde{V}\pth{\frac{1}{2}+it}| \ll x^{\theta-1/2}$, so
\[
S \ll \frac{\delta x^\theta}{x^{1/2}} \sum_{1\leq \abs{m}\leq M_1} \int_{-T_1}^{T_1} \abs{F_m\pth{\fract{1}{2}+it}} dt \ll \frac{\delta x^\theta}{x^{1/2}} \sum_{1\leq m \leq M_1} \int_{0}^{T_1} \abs{F_m\pth{\fract{1}{2}+it}} dt,
\]
the last inequality following from (\ref{eq:LSymmetry}). We divide the ranges of $m$ and $t$ into dyadic intervals $[M,2M]$ and $[T,2T]$ for $M,T\geq 1$ along with the additional interval $[0,1]$ for $t$. Theorem \ref{thm:Asymptotic} now follows from
\begin{lem}\label{lem:MainLemma}
We have
\[
\sum_{M \leq m \leq 2M} \int_{T}^{2T} \abs{F_m\pth{\fract{1}{2}+it}} dt \ll \frac{x^{1/2}}{(\log x)^{2J+3}},
\]
uniformly for $1\leq M \leq M_1$ and $1 \leq T \leq T_1$. The expression with an integral over $[0,1]$ also satisfies this bound.
\end{lem}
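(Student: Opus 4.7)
The plan is to bound $F_m(1/2+it)$ by bundling the $2J$ factors $f_{j,m}$ into subproducts whose lengths lie in favorable ranges, and then to reduce to a large-values counting problem. Since $\prod_j N_j \asymp x$ (because the smoothing $V$ forces $\ida$ to have norm $\asymp x$ and the $W_j$ are supported on $[N_j/2, 2N_j]$), we may assume by relabelling that the $N_j$ are sorted. Introducing a dyadic parameter $W$, we reduce to bounding the cardinality $R = R(W)$ of the set of pairs $(m,t) \in [M,2M] \times [T,2T]$ for which $\abs{F_m(1/2+it)} \asymp W$, since then $\int \abs{F_m}\, dt \ll RW$, and we must show $RW \ll x^{1/2}(\log x)^{-2J-3}$ uniformly in each dyadic choice of the $N_j$ and $W$.

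Following Section \ref{sec:Reduction}, for each such configuration we isolate a subproduct $f(s) = \prod_{j \in \coli} f_{j,m}(s)$ whose length $N = \prod_{j \in \coli} N_j$ best suits our available estimates, and whose coefficient $\ell^2$-norm $G$ satisfies $G \ll N(\log x)^B$. Using trivial bounds on the complementary factors, the condition $\abs{F_m(1/2+it)} \asymp W$ reduces to $\abs{f(1/2+it)} \gg W'$ for some derived threshold $W'$. To estimate the number $R$ of such $(m,t)$ pairs, the toolkit consists of Coleman's hybrid large sieve
\[
R \ll (N + M^2 + T^2)G(W')^{-2},
\]
Coleman's analogue of Huxley's large value theorem
\[
R \ll NG(W')^{-2} + (M^2 + T^2)NG^3 (W')^{-6},
\]
the trivial bound
\[
R \ll \min(M,T)NG(W')^{-2} + MTG(W')^{-2},
\]
and Ricci's subconvexity bound for $L(1/2 + it, \lambda^m)$, which becomes relevant when $\coli$ contains a factor of length comparable to $x$.

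The bulk of the argument is a case analysis driven by the sorted sizes of the $N_j$. In the short polynomial regime, in which every $N_j$ is small, we can aggregate factors to produce a subproduct $f$ of nearly any desired length up to $x^{1/2}$, and a suitable choice among the large sieve, Huxley, and trivial bounds is close to sharp. When some $N_j$ is large, say $N_j \gtrsim x^{1/2}$, we isolate that long factor, combine it with a suitable complementary factor to expose $L(s,\lambda^m)$, and apply Ricci's subconvexity; the remaining factors are then treated as in the short case. The hypothesis $\delta x^\theta \geq x^{7/10+\ep}$ enters via $M \leq \delta^{-1} x^\ep$ and $T \leq x^{1-\theta+\ep}$, yielding $MT \ll x^{3/10+O(\ep)}$, which is precisely what is needed for the estimates to close.

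The main obstacle will be the balanced regime where all the $N_j$ are of comparable size of order $x^{1/(2J)}$: here no factor stands out and the available estimates must cooperate so that none of the error terms arising from the large sieve, Huxley, or subconvexity dominates. As the outline notes, even a conjecturally optimal large sieve would leave this worst case unchanged, which is the root cause of the exponent $7/10$. The integral over $t \in [0,1]$ is handled by the same framework with $T$ replaced by $1$, using Ricci's zero-free region and standard estimates for $L(s, \lambda^m)$ at bounded height.
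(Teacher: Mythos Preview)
Your framework has the right ingredients (Coleman's sieve, the Huxley-type large-value bound, Ricci's subconvexity, a case analysis on the factor lengths), but the reduction to a large-values count is not the one the paper uses, and as stated it has a genuine gap.

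The step ``using trivial bounds on the complementary factors, the condition $|F_m(\tfrac12+it)|\asymp W$ reduces to $|f(\tfrac12+it)|\gg W'$'' is where things break. Bounding each $f_{k,m}$ for $k\notin\coli$ by $N_k^{1/2}$ gives only $W'\asymp W(N/x)^{1/2}$, and then, for instance, Coleman's sieve yields
\[
RW \;\ll\; \Bigl(1+\frac{(MT)^{1+\coll}}{N}\Bigr)\,\frac{x}{W}\,(\log x)^B,
\]
which is $\gg x^{1/2}$ throughout the nontrivial range $x^{1/2}(MT)^{-1}\ll W\ll x^{1/2}$, and even at the top of the range gives no logarithmic saving. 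The trivial bound on the complement decouples $W'$ from the actual sizes of the complementary factors at the points in question: you pay the full $(x/N)^{1/2}$ whether or not those factors are genuinely large there.

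The paper's reduction keeps that coupling. After trivially removing only the genuinely tiny factors ($N_j\le x^{\Delta/J}$), it writes each surviving factor as $|f_{j,m}(\tfrac12+it_n)|=N_j^{\sigma(j,m,n)-1/2}$ and, for each $(m,t_n)$, picks the index $j$ with \emph{maximal} $\sigma(j,m,n)$. This single parameter $\sigma$ simultaneously gives the upper bound $|G_m|\le Y^{\sigma-1/2}$ on the whole product and the lower bound $|f_{j,m}|\ge N_j^{\sigma-1/2}$ on one factor; the large-values problem is then posed for that one factor, raised to an integer power $g$ to place $N_j^g$ in whatever range the chosen estimate among (\ref{eq:R-Coleman})--(\ref{eq:R-Huxley}) wants. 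The target becomes $R\ll(MT)^{10(1-\sigma)/3}(\log x)^B$, and Coleman's Vinogradov--Korobov zero-free region (Lemmas~\ref{lem:ZeroFreeRegion} and~\ref{lem:CritStripEstimates}, not Ricci's) forces $\sigma\le 1-\eta/2$ with $\eta\asymp(\log x)^{-2/3}(\log\log x)^{-1/3}$; this is precisely what converts the bound into an arbitrary power-of-$\log$ saving, and it has no counterpart in your sketch.

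Your identification of the worst case is also off. The balanced regime $N_j\asymp x^{1/(2J)}$ is in fact the easiest, since raising a single short factor to a power $g$ lets $N^g$ land in any prescribed dyadic window. The genuine bottlenecks (Section~\ref{sec:LongPolynomials2}) sit at isolated points such as $(\beta,\coll,\sigma)=(5/9,5/9,4/5)$ and $(5/6,1,9/10)$, where the available estimates all meet exactly at $(MT)^{10(1-\sigma)/3}$.
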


\section{Reduction to Large Values}\label{sec:Reduction}

In this section, we reduce the proof of Lemma \ref{lem:MainLemma} to the estimation of the number of large values of a certain Dirichlet polynomials. We begin by letting $\Delta$ be a small parameter to be chosen and write $F_m(s) = G_m(s)H_m(s)$, where $H_m(s)$ is the product of those factors for which the lengths $N_j$ satisfy $N_j \leq x^{\Delta/J}$. Since
\[
\abs{f_{1,m}\pth{\fract{1}{2}+ it}} \ll N_1^{1/2}\log x; \qquad \abs{f_{j,m}\pth{\fract{1}{2}+ it}} \ll N_j^{1/2},\  (j\geq 2),
\]
we  have
\[
\abs{H_m\pth{\fract{1}{2}+it}} \ll Z^{1/2}\log x,
\]
where $Z$ is the product of those $N_j$ with $N_j \leq x^{\Delta/J}$. Then

\begin{equation}\label{eq:BreakApartF}
\int_{T}^{2T} \sum_{M \leq m \leq 2M} \abs{F_m\pth{\fract{1}{2}+it}}dt \ll Z^{1/2}\log x \int_{T}^{2T} \sum_{M \leq m \leq 2M}  \abs{G_m\pth{\fract{1}{2}+it}}dt.
\end{equation}
We now bound the integral on the right ($I$, say) by a set of $O(T)$ well-spaced points $t_n$. We have
\[
I \ll \sum_{n} \sum_{M \leq m \leq 2M} \abs{G_m\pth{\fract{1}{2}+it_n}},
\]
where $\abs{t_l-t_n} \geq 1$ for $l\neq n$. For each triple $j,m,n$, let
\[
\abs{f_{j,m}\pth{\fract{1}{2}+it_n}} = N_j^{\sigma(j,m,n)-1/2}(\log x)^4.
\]
We need to show that $\sigma(j,m,n)$ cannot be too close to 1.  We treat the case $j > J$, for which
\[
f_{j,m}(s) = \sum_{\ida} \frac{\mu(\ida)\lambda^m(\ida)W_j(\ida)}{(N\ida)^s}.
\]
The case $j \leq J$ would be very similar. By Mellin inversion
\[
f_{j,m}\pth{\fract{1}{2}+it} = \int\limits_{(c)} L\pth{\fract{1}{2}+it+s,\lambda^m}^{-1} N_j^s \tilde{W}_j(s) ds,
\]
where $c = \frac{1}{2} + (\log x)^{-1}$. We have trivially that
\[
\frac{1}{\abs{L(1+(\log x)^{-1}+it,\lambda^m)}} \leq \zeta_K(1+(\log x)^{-1}) \ll \log x,
\]
(here again $\zeta_K$ is the Dedekind zeta function for $\Z[i]$). Truncating the integral at height $x^\ep$ and using the rapid decay of $\tilde{W}$ gives
\[
f_{j,m}\pth{\fract{1}{2}+it} = \int_{c-ix^\ep}^{c+ix^\ep} L\pth{\fract{1}{2}+it+s,\lambda^m}^{-1} N_j^s \tilde{W}_j(s) ds
\]
with negligible error. We now use Lemmas \ref{lem:ZeroFreeRegion} and \ref{lem:CritStripEstimates} to move the line of integration to the left of $\Re s = \frac{1}{2}$. Then in the region
\[
1 - \eta \leq \Re w \leq \fract{1}{2} + c, \qquad \abs{\Im w - t} \leq x,
\]
where
\[
\eta = C(\log x)^{-2/3}(\log\log x)^{-1/3},
\]
we have 
\[
\frac{1}{L(w,\lambda^m)} \ll \log x
\]
Moving the line of integration to $1/2 - \eta$, we thus have
\[
\abs{f_{j,m}\pth{\fract{1}{2}+it}} \ll (\log x)N_j^{1/2-\eta},
\]
from which it follows that
\[
\sigma(j,m,n) \leq 1 - \eta
\]
for $x$ sufficiently large. We now split the available range for $\sigma(j,m,n)$ into $O(\log x)$ ranges $I_0 = (-\infty, \frac{1}{2})$ and 
\[
I_l = \hor{\frac{1}{2}+\frac{l-1}{L}, \frac{1}{2}  + \frac{l}{L}},\quad \pth{1 \leq l \leq 1+L/2,\ L = \floor{\log x}},
\]
For each $j,l$, let 
\[
C(j,l) = \set{(m,t_n) : \max_{1\leq k \leq 2J} \sigma(k,m,n)= \sigma(j,m,n)\ \text{and}\  \sigma(j,m,n)\in I_l}.
\]
Since there are $O(\log x)$ classes $C(j,l)$, there must exist some class $\mathcal{C}$ for which
\[
I \ll (\log x)\sum_{(m,t) \in \mathcal{C}} \abs{G_m\pth{\fract{1}{2} + it}}.
\]
For $(m,t) \in \mathcal{C}$, we have
\[
\abs{G_m\pth{\fract{1}{2} + it}} = \prod N_j^{\sigma(j,m,n) - 1/2} \leq \prod N_j^{l/L} = Y^{l/L},
\]
where $Y$ is the product of the $N_j$ with $N_j > x^{\Delta/J}$. To simplify notation, let
\[
\sigma = \frac{1}{2} + \frac{l-1}{L}, \quad f_m(s) = f_{j,m}(s), \quad N = N_j, \quad R = \# \mathcal{C}.
\]
If $l = 0$, then $I \ll MT \log x$, so (\ref{eq:BreakApartF}) gives
\[
\int_{T}^{2T} \sum_{M\leq m \leq 2M} \abs{F_m\pth{\fract{1}{2}+it}} dt \ll Z^{1/2} M_1T_1 (\log x)^2 \ll \delta^{-1}x^{1-\theta+\Delta + \ep} \ll \frac{x^{1/2}}{(\log x)^A}
\]
since we may assume $\Delta < \frac{1}{5}$ and $x^{\theta}\delta > x^{7/10}$. If $l \geq 1$, we have
\[
I \ll (Y^{\sigma - 1/2})R \log x,
\]
and so
\[
\int_{T}^{2T} \sum_{M \leq m \leq 2M} \abs{F_m\pth{\fract{1}{2}+it}}dt \ll Z^{1/2}Y^{\sigma-1/2}R(\log x)^2.
\]
Now since
\[
Z^{1/2}Y^{\sigma-1/2} \ll Z^{1/2}(xZ^{-1})^{\sigma - 1/2} \ll x^{1/2}(Zx^{-1})^{1-\sigma} \ll x^{1/2+(2\Delta-1)(1-\sigma)},
\]
we find that
\begin{equation}\label{eq:MeanValueRBound}
\int_{T}^{2T} \sum_{M \leq m \leq 2M} \abs{F_m\pth{\fract{1}{2}+it}}dt \ll x^{1/2}(\log x)^2 \fracp{R}{x^{(1-2\Delta)(1-\sigma)}}.
\end{equation}
It remains to estimate $R$. For each $(t,m) \in \colc$, we have
\[
\abs{f_m\pth{\fract{1}{2} + it}} \gg N^{\sigma - 1/2},
\]
Since $\sigma \leq 1-\eta/2$ we see that Lemma \ref{lem:MainLemma} follows from the bound
\begin{equation}\label{eq:RBound}
R \ll x^{(1-3\Delta)(1-\sigma)}(\log x)^B
\end{equation}
for any fixed $B > 0$, since then the expression on the right of(\ref{eq:MeanValueRBound}) is bounded by taking $\sigma = 1-\eta/2$, and the definition of $\eta$ allows us to save arbitrary powers of $\log x$. To deduce the requisite bound for $R$, it is sufficient to show that
\begin{equation}\label{eq:RMTBound}
R \ll (MT)^{10(1-\sigma)/3}(\log x)^B
\end{equation}
uniformly in $M,T,\sigma$, since $MT \leq M_1T_1 = x^{1-\theta+\ep}\delta^{-1} \leq x^{3/10-\ep}$.

\section{Mean and Large Value Results}\label{sec:MeanAndLarge}

To estimate $R$, we will need several mean-value results of the form
\begin{equation}\label{eq:MeanValueForm}
\sum_{\abs{m} \leq M} \int_{-T}^{T} \sumabs{\sum_{N\ida\asymp N} c(\ida) \lambda^m(\ida) (N\ida)^{-it}}^2 dt \ll \cold \sum_{N\ida\asymp N} \abs{c(\ida)}^2
\end{equation}
for some $\cold = \cold(N,M,T)$, where $c(\ida)$ are arbitrary complex coefficients defined on the ideals of $\Z[i]$. First, we have Coleman's hybrid large sieve (Theorem 6.2 of \citep{ColemanBinary}).

\begin{lem}[Coleman]\label{lem:ColemanSieve}
The estimate (\ref{eq:MeanValueForm}) holds with
\begin{equation}\label{eq:ColemanSieve}
\cold = M^2 + T^2 + N.
\end{equation}
\end{lem}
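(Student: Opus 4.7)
The plan is to prove this hybrid large-sieve bound via duality combined with a two-dimensional large-sieve argument, following the general scheme of \citep{ColemanBinary}. By the duality principle for bilinear forms, the inequality (\ref{eq:MeanValueForm}) with $\cold = M^2 + T^2 + N$ is equivalent to
\[
\sum_{N(\ida)\asymp N}\sumabs{\sum_{\abs{m}\leq M}\int_{-T}^T a(m,t)\,\lambda^m(\ida) N(\ida)^{-it}\,dt}^2 \ll \cold \sum_{\abs{m}\leq M}\int_{-T}^T \abs{a(m,t)}^2\,dt,
\]
which is the form I would pursue. The key observation is that $\lambda^m(\ida) N(\ida)^{-it} = \exp(i(4m\theta_\ida - t y_\ida))$, where $\theta_\ida = \arg\ida$ and $y_\ida = \log N(\ida)$. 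So the problem becomes a hybrid two-dimensional additive large sieve with frequencies $(4m,-t)$ lying in $[-4M,4M]\times[-T,T]$ and sample points $(\theta_\ida,y_\ida)$ on the dual torus $\R/(\pi/2)\Z \times \R$.

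First I would establish the spacing of these sample points: for two distinct ideals $\ida,\idb$ of norm $\asymp N$ with generators $\alpha,\beta$, the estimate $\abs{\alpha-\beta}\geq 1$ together with $\abs{\alpha},\abs{\beta}\asymp \sqrt N$ forces either $\abs{\theta_\ida-\theta_\idb}\gg N^{-1/2}$ or $\abs{y_\ida-y_\idb}\gg N^{-1}$. Next I would expand the square and compute the resulting kernel
\[
K(\ida,\idb) = \sumpth{\sum_{\abs{m}\leq M} e^{4im(\theta_\ida-\theta_\idb)}}\cdot\sumpth{\int_{-T}^T e^{-it(y_\ida-y_\idb)}\,dt} \ll \min\pth{M,\tfrac{1}{\abs{\theta_\ida-\theta_\idb}}}\cdot\min\pth{T,\tfrac{1}{\abs{y_\ida-y_\idb}}},
\]
a product of an angular Dirichlet kernel and a radial sinc. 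The diagonal contribution $K(\ida,\ida)=(2M+1)\cdot 2T \ll M^2+T^2$ produces the $(M^2+T^2)\sum\abs{c(\ida)}^2$ portion of the bound, and the off-diagonal part is attacked by Cauchy--Schwarz combined with the above spacing estimates.

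The main obstacle is obtaining exactly $M^2+T^2+N$ rather than a weaker bound when summing the off-diagonal kernel. The $N$ term arises from clusters of $\asymp\sqrt N$ lattice points lying near a common angular direction, where only the radial separation $\gg 1/N$ gives cancellation. To handle this, I would dyadically partition pairs $(\ida,\idb)$ by the sizes of $\abs{\theta_\ida-\theta_\idb}$ and $\abs{y_\ida-y_\idb}$, count the $\idb$ satisfying a given pair of separation constraints via a lattice-point estimate in the annulus $N(\idb)\asymp N$, and sum the resulting geometric series to obtain $\sum_\idb \abs{K(\ida,\idb)}\ll M^2+T^2+N$ uniformly in $\ida$. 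An alternative route, which sidesteps some of the bookkeeping, is to apply Gallagher's continuous mean-value lemma to discretize the $t$-integral at unit spacing and then invoke a classical two-dimensional large sieve for the well-separated set $\{(\theta_\ida,y_\ida)\}$, reducing the question to the standard density bound for Montgomery-type sieves.
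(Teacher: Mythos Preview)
The paper does not give its own proof of this lemma: it simply cites Theorem~6.2 of \citep{ColemanBinary}. So there is no argument in the paper to compare against, and your proposal is an independent attempt to prove Coleman's result from scratch.

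Your outline is a reasonable route toward a hybrid large sieve of this shape, but a few points need tightening. First, you announce that you will work with the dual form and then immediately expand the square in the \emph{original} form (the kernel $K(\ida,\idb)$ and the coefficients $c(\ida)$ belong to the direct inequality, not the dual one); choose one and stay with it. Second, the spacing claim is slightly off: from $|\alpha-\beta|\geq 1$ with $|\alpha|,|\beta|\asymp\sqrt{N}$ one deduces that either $|\theta_\ida-\theta_\idb|\gg N^{-1/2}$ or $\bigl||\alpha|-|\beta|\bigr|\gg 1$, and the latter gives $|y_\ida-y_\idb|\gg N^{-1/2}$, not $N^{-1}$.

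More seriously, your sketch does not make clear where the term $M^2+T^2$ (as opposed to $MT$) actually arises. The diagonal contributes exactly $K(\ida,\ida)\asymp MT$, and your description of the off-diagonal suggests it should contribute $\asymp N$; taken at face value this would yield $\cold\ll MT+N$, the optimal hybrid large sieve which the paper explicitly remarks is \emph{not} available in the literature (see the discussion just before the outline in Section~1 and again in Section~\ref{sec:LongPolynomials2}). The gap is in the off-diagonal dyadic count: when the angular window is narrower than $N^{-1/2}$ the ideals lie essentially along a single ray, the count depends on the norm of the primitive lattice point on that ray, and a uniform area-type bound $\delta\eta N$ is not valid. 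It is precisely in handling these degenerate boxes that extra terms appear, and your sketch does not yet indicate how they are controlled or why they amount to $M^2+T^2$ rather than something larger or smaller. Until that step is made explicit, the proposal is a plan rather than a proof.
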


\noindent Additionally, we also have the following trivial estimate.
\begin{lem}
The estimate (\ref{eq:MeanValueForm}) holds with
\begin{equation}\label{eq:TrivialSieves}
\cold = MT + N\min(M,T).
\end{equation}
\end{lem}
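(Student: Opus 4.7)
My plan is to establish $\cold = MT + N\min(M,T)$ by proving the two separate bounds $(MT+NM)G$ and $(MT+NT)G$ via one-variable mean-value estimates, then taking the minimum.

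For the first bound, I would fix $m$ and apply the classical mean value theorem for Dirichlet polynomials to the inner integral. Setting $b(\ida) = c(\ida)\lambda^m(\ida)$ (so $|b(\ida)| = |c(\ida)|$) and grouping ideals by norm gives a Dirichlet polynomial $\sum_n e_n n^{-it}$ with $e_n = \sum_{N(\ida)=n} b(\ida)$; the classical MVT then yields
\[
\int_{-T}^{T}\left|\sum_\ida b(\ida)N(\ida)^{-it}\right|^{2} dt \ll (T+N)\sum_n|e_n|^{2} \ll (T+N)G,
\]
where the last step uses $\sum_n|e_n|^2 \leq \sum_\ida r(N(\ida))|c(\ida)|^2$ together with $r(n) \ll N^{\ep}$ (with the $N^{\ep}$ factor absorbed into the implied constant, or alternatively removed by expanding the square and handling the off-diagonal through Hilbert's inequality using $|\log(N(\ida)/N(\idb))| \gg |N(\ida)-N(\idb)|/N$ in the annulus $N(\ida) \asymp N(\idb) \asymp N$). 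Summing trivially over $|m|\leq M$ yields $(MT+NM)G$, the desired bound in the regime $M \leq T$.

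For the second bound, I would fix $t$ and apply the classical one-variable large sieve to the angular characters $e^{4im\arg\ida}$. The key input is the minimum spacing $\delta$ of the angles $4\arg\ida \pmod{2\pi}$ for distinct ideals with $N(\ida) \asymp N$: using $\sin(\arg\alpha - \arg\beta) = (ad-bc)/(|\alpha||\beta|)$ for generators $\alpha = a+bi$, $\beta = c+di$, together with $|ad-bc|\geq 1$ whenever $\alpha,\beta$ generate distinct ideals, one obtains $\delta \gg 1/N$. The large sieve inequality then gives
\[
\sum_{|m|\leq M}\left|\sum_\ida c(\ida)N(\ida)^{-it}e^{4im\arg\ida}\right|^{2} \ll (M + N)G
\]
for each fixed $t$. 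Integrating trivially over $|t|\leq T$ yields $(MT+NT)G$, the desired bound in the regime $T \leq M$. Combining the two cases gives the lemma, with the main technical delicacy being the justification of the clean $(T+N)$ constant in the MVT step, i.e., the removal of the multiplicity $r(n)$ in a manner consistent with the absolute-constant statement of the result.
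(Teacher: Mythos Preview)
Your two-case strategy --- mean-value in $t$ for the $NM$ term, angular large sieve in $m$ for the $NT$ term --- matches the paper's approach. For $M\leq T$ the paper simply invokes the mean-value theorem for Dirichlet polynomials and writes $(T+O(N))\sum_{\ida}|c(\ida)|^{2}$ without further comment; you are right to flag the multiplicity $r(n)$ that appears when grouping ideals by norm, an issue the paper glosses over (the resulting divisor-type loss is in any case harmless for the application, where everything is controlled only up to $(\log x)^{B}$).

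The genuine gap is in your $T\leq M$ case. The spacing claim $\delta\gg 1/N$ is false: distinct ideals can have identical argument, so your assertion ``$|ad-bc|\geq 1$ whenever $\alpha,\beta$ generate distinct ideals'' fails. Concretely, for $\alpha=5$ and $\beta=7$ one has $ad-bc=0$ while $(5)\neq(7)$; more generally every rational integer $k$ with $k^{2}\in[N,2N]$ yields a distinct ideal with $\arg=0$, so a single angle can support $\asymp\sqrt{N}$ ideals in the dyadic range. One can rescue the large sieve by first grouping ideals along each primitive direction --- the \emph{distinct} primitive directions are indeed $\gg 1/N$-spaced --- but then passing from $\sum_{\theta}|B_{\theta}|^{2}$ back to $\sum_{\ida}|c(\ida)|^{2}$ costs a multiplicity factor, exactly the obstacle you already met (and acknowledged) in the norm case. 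The paper does not give a self-contained argument here: it simply cites Ricci's thesis (Theorem~C) for the case $T\leq M$. To make your route work you would need either to accept a mild loss (which suffices downstream) or to carry out a more refined duality argument that handles collinear ideals.
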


\begin{proof}
For the case $T\leq M$, see \citep{RicciThesis}, Theorem C. For the other case, the mean-value theorem for Dirichlet polynomials gives
\[
\int_{-T}^{T} \sumabs{\sum_{N\ida\asymp N} c(\ida) \lambda^m(\ida) (N\ida)^{-it}}^2 dt = (T+O(N)) \sum_{N\ida \asymp N} \abs{c(\ida)}^2.
\]
Summing over $m$ gives the other estimate.

\end{proof}

Note that in each of the estimates above, the integral over $t$ can be replaced by a sum over well-spaced points at the cost of a logarithmic factor, which will not affect our results.

For the problem at hand, the natural quantity to work with is $MT$, rather than the minimum or maximum of $M$ and $T$. To this end, let
\[
\coll = \coll(M,T) = \frac{\abs{\log(M/T)}}{\log MT}
\]
so that
\[
\max(M^2,T^2) = (MT)^{1+\coll} \mand \min(M^2,T^2) = (MT)^{1-\coll}.
\]
We will regard $\coll$ as an arbitrary parameter assuming values in $[0,1]$. The estimates (\ref{eq:ColemanSieve}) and (\ref{eq:TrivialSieves}) become, respectively
\[
(MT)^{1+\coll} + N \mand MT + N(MT)^{(1-\coll)/2}.
\]
We will apply these estimates to suitable powers of the polynomial $f_m\pth{\fract{1}{2}+it}$. For any integer $g \geq 1$, we have
\[
R N^{g(2\sigma -1)} \ll \sum_{(m,t) \in\colc} \sumabs{\sum_{\ida} \frac{a(\ida)W(N\ida)\lambda^m(\ida)}{(N\ida)^{1/2+it}}}^{2g} \ll \cold(N^g,M,T) \sum_{N\ida\asymp N} \frac{\abs{b(\ida)}^2}{N\ida},
\]
say where $\abs{b(\ida)} \leq d_g(\ida)(\log x)^g$. Using Lemma \ref{lem:DivisorBound} and partial summation, we find that the coefficient sum on the right is $O((\log x)^B)$ for some $B$ which depends on $g$. Since $g$ is bounded in terms of $\Delta$, we find that $B$ and the implied constant depend at most on our choice of $\Delta$. Thus
\[
RN^{g(2\sigma-1)} \ll \pth{MT + N^g(MT)^{(1-\coll)/2}} (\log x)^B,
\]
\[
RN^{g(2\sigma-1)} \ll \pth{(MT)^{1+\coll} + N^g}(\log x)^B.
\]
We will also make use of the following large values result of Coleman (Theorem 7.3 of \citep{ColemanBinary}  with $\theta=0$) which is proved using Huxley's subdivision method:
\[
R \ll \pth{N^{2g(1-\sigma)}+(M^2+T^2)N^{g(4-6\sigma)} }(\log x)^B \ll \pth{N^{2g(1-\sigma)}+(MT)^{1+\coll} N^{g(4-6\sigma)} }(\log x)^B.
\]
For any integer $g \geq 1$, the estimates above give

\begin{equation}\label{eq:R-Coleman}
R\ll \pth{(MT)^{1+\coll}N^{g(1-2\sigma)} + N^{2g(1-\sigma)}}(\log x)^B,
\end{equation}
\begin{equation}\label{eq:R-Trivial}
R\ll \pth{MT N^{g(1-2\sigma)} + (MT)^{(1-\coll)/2} N^{2g(1-\sigma)}}(\log x)^B,
\end{equation}
\begin{equation}\label{eq:R-Huxley}
R\ll \pth{(MT)^{1+\coll}N^{g(4-6\sigma)} + N^{2g(1-\sigma)}}(\log x)^B.
\end{equation}

The last estimate is useful only when $\sigma \geq 3/4$, and any time it is used, $\sigma$ will be assumed to lie in this range. In the each of the estimates above, the first summand decreases in $g$, and the second increases. Writing $N = (MT)^\beta$, one would like to choose
\begin{equation}\label{eq:idealgchoice}
g = \frac{1+\coll}{\beta}, \quad \frac{1+\coll}{2\beta}, \quad \frac{1+\coll}{2\beta(2\sigma-1)},
\end{equation}
respectively, so as to equalize the two summands in each estimate. 

%
%
Unfortunately, $g$ must be chosen to be an integer, and this adds a fair amount of complication to our analysis. The optimal choices for $g$ in (\ref{eq:R-Coleman}) -- (\ref{eq:R-Huxley}) are obtained by taking the floor of the values in (\ref{eq:idealgchoice}), or 1 plus the floor. Thus, unconditionally, we have $R \ll (MT)^{\min(\alpha_1,\ldots,\alpha_6)}$, where
\begin{equation}\label{eq:allEstimates}
\begin{aligned}
\alpha_1(\coll,\beta,\sigma) &= 1 + \coll + \beta \floor{\frac{1+\coll}{\beta}}(1-2\sigma), \\
\alpha_2(\coll,\beta,\sigma) &= 2\beta\floor{\frac{1+\coll}{\beta}+1}(1-\sigma), \\
\alpha_3(\coll,\beta,\sigma) &= 1 + \beta \floor{\frac{1+\coll}{2\beta}}(1-2\sigma), \\
\alpha_4(\coll,\beta,\sigma) &= \frac{1-\coll}{2} + 2\beta \floor{\frac{1+\coll}{2\beta} + 1}(1-\sigma), \\
\alpha_5(\coll,\beta,\sigma) &= 1+\coll + \beta \floor{\frac{1+\coll}{2\beta(2\sigma-1)}}(4-6\sigma), \\
\alpha_6(\coll,\beta,\sigma) &= 2\beta \floor{\frac{1+\coll}{2\beta(2\sigma-1)} + 1}(1-\sigma).
\end{aligned}
\end{equation}
where $\alpha_1,\alpha_3,\alpha_5$ apply only when the expression in the floor brackets is at least 1. We also define $\cola_0$ to be the minimum of these six estimates and $\cola_i = \min(\alpha_{2i-1},\alpha_{2i})$ for $i=1,2,3$. Our analysis now proceeds by fixing $\beta$ and $\sigma$ and understanding the behavior of $\cola_0$ as $\coll$ ranges between $0$ and $1$. For this, we will need the following propositions which describe the behavior of $\cola_i$ for $i=1,2,3$. The proofs of these propositions are very similar, so we only prove Proposition \ref{prop:a1}. For notational brevity, we also suppress the dependence of $\alpha_i$ and $\cola_i$ on $\beta$ and $\sigma$. 

\begin{prop}\label{prop:a1}
Fix $\beta \in (0,\frac{5}{6})$ and $\sigma \in (\frac{7}{10},\frac{3}{4})$. For $n\in\Z$, define
\[
\coll_n^{(1,d)} = \beta n-1 \mand \coll_n^{(1,e)} = \coll_n^{(1,d)} + 2\beta(1-\sigma).
\]
Then on $[0,1]\cap\hor{\coll_{n}^{(1,d)}, \coll_{n+1}^{(1,d)}}$, we have
\[
\cola_1(\coll) = \begin{cases}
1+\coll + \beta n (1-2\sigma) & \text{if $\coll \leq \coll_{n}^{(1,e)}$}, \\
2\beta(n+1)(1-\sigma) & \text{if $\coll \geq \coll_{n}^{(1,e)}$}.
\end{cases}
\]
In particular, $\cola_1(\coll)$ is a continuous non-decreasing function of $\coll$ on $[0,1]$.
\end{prop}

\begin{prop}\label{prop:a2}
Fix $\beta\in(0,\frac{5}{3})$ and $\sigma \in (\frac{7}{10},1)$. For $n\in\Z$, define
\[
\coll_n^{(2,d)} = 2\beta n-1 \mand \coll_n^{(2,e)} = \coll_n^{(2,d)}  + 4\beta(1-\sigma).
\]
Then on $[0,1]\cap\hor{\coll_{n}^{(2,d)}, \coll_{n+1}^{(2,d)}}$, we have
\[
\cola_2(\coll) = \begin{cases}
1+\beta n (1-2\sigma) & \text{if $\coll \leq \coll_{n}^{(2,e)}$}, \\
\frac{1-\coll}{2}+2\beta (n+1) (1-\sigma) & \text{if $\coll \geq \coll_{n}^{(2,e)}$},
\end{cases}
\]
In particular, $\cola_2(\coll)$ is a continuous non-increasing function of $\coll$ on $[0,1]$.
\end{prop}

\begin{prop}\label{prop:a3}
Fix $\beta \in (0,\frac{5}{6})$ and $\sigma \in (\frac{3}{4},1)$. For $n\in\Z$, define
\[
\coll_n^{(3,d)} = 2\beta(2\sigma-1) n -1 \mand \coll_n^{(3,e)} = \coll_n^{(3,d)} + 2\beta(1-\sigma).
\]
Then on $[0,1]\cap\hor{\coll_{n}^{(3,d)}, \coll_{n+1}^{(3,d)}}$, we have
\[
\cola_3(\coll) = \begin{cases}
1+\coll + \beta n (4-6\sigma) & \text{if $\coll \leq \coll_{n}^{(3,e)}$}, \\
2\beta (n+1) (1-\sigma) & \text{if $\coll \geq \coll_{n}^{(3,e)}$},
\end{cases}
\]
In particular, $\cola_3(\coll)$ is a continuous non-decreasing function of $\coll$ on $[0,1]$.
\end{prop}

\begin{proof}[Proof of Proposition \ref{prop:a1}] A short computation shows that the solutions to $\alpha_1(\coll) = \alpha_2(\coll)$ are given by 
\[
\coll_m^{(1,e)} = m\beta - 1 +2\beta(1-\sigma), \qquad \text{for}\ \frac{1}{\beta}-2(1-\sigma) \leq m \leq \frac{2}{\beta} - 2(1-\sigma),
\]
and that the points of discontinuity of $\cola_1(\coll)$ are given by
\[
\coll_n^{(1,d)} = n\beta-1, \qquad \frac{1}{\beta} \leq n \leq \frac{2}{\beta}.
\]
Since $\coll_{m+1}^{(1,e)} - \coll_m^{(1,e)} = \coll_{n+1}^{(1,d)} - \coll_{n}^{(1,d)} = \beta$, and since $\sigma \neq 1$, there is a unique point of intersection, say $\coll_{m_n}^{(1,e)}$, between each pair $\coll_{n}^{(1,d)}$, $\coll_{n+1}^{(1,d)}$ of points  of discontinuity, and it is easy to check that $m_n = n$. Moreover, for a fixed value of $\floor{\frac{1+\coll}{\beta}}$, i.e. on the interval between two points of discontinuity, it is clear that $\alpha_1$ increases in $\coll$, and $\alpha_2$ is constant. Thus $\cola_1$ is non-decreasing on each interval $\hor{\coll_{n}^{(1,d)}, \coll_{n+1}^{(1,d)}}$. Finally, we note that $\alpha_2\pth{\coll_{n-1}^{(1,d)}} = \alpha_2\pth{\coll_{n}^{(1,d)}-\ep} = \alpha_1\pth{\coll_{n}^{(1,d)}}$ for all $\ep > 0$ sufficiently small. Thus $\cola_1$ is continuous, proving the last statement of the proposition.

\end{proof}

It is worth noting that the results of these propositions extend to some slightly wider ranges of $\beta$ and $\sigma$. For clarity of exposition, we have included only the ranges we need for our analysis. From these propositions, we can also deduce the following upper bounds, which have the benefit of being linear in $\coll$.

\begin{cor}\label{cor:LinearUpperBounds}
For all $\coll\in[0,1]$, $\beta \in (0,\frac{2}{3})$, and $\sigma \in \pth{\fract{7}{10},1}$, we have $\cola_i \leq \colb_i$, where
\[
\begin{aligned}
\colb_1(\coll,\beta,\sigma) &= 2(1+\coll+\beta)(1-\sigma) - 4\beta(1-\sigma)^2,\\
\colb_2(\coll,\beta,\sigma) &= \pth{\fract{1}{2}-\sigma}(1+\coll-2\beta(1-\sigma)) + 2\beta(1-\sigma),\\
\colb_3(\coll,\beta,\sigma) &= \fracp{1-\sigma}{2\sigma-1}(1+\coll - 2\beta(1-\sigma)) + 2\beta(1-\sigma).
\end{aligned}
\]
\end{cor}

\begin{proof}
The functions $\colb_i$ are the linear interpolations of the points $\pth{\coll_m^{(i,e)}, \cola_i\pth{\coll_m^{(i,e)}}}$.
\end{proof}

\section{Short Polynomials}\label{sec:ShortPolynomials}

We are now ready to apply the estimates in Section \ref{sec:MeanAndLarge} to estimate the quantity $R$. We will need a subconvexity estimate for Hecke $L$-functions (Lemma \ref{lem:Subconvexity} below) to eliminate certain ranges of $\coll,\beta,\sigma$. This will require the coefficients $a(\ida)$ to be smooth, which is ensured by $N > X$. As such, the present section is devoted to the case $N \leq X$, where we do not require subconvexity. We divide into several cases.

\subsection*{Case 1.1: $MT \leq X$}
Choose $g$ so that
\[
X^2 \leq N^g \leq X^3.
\]
Then $(MT)^{1+\coll} \leq (MT)^2 \leq X^2$, so by (\ref{eq:R-Coleman}), we have
\[
R \ll \pth{X^{2+2(1-2\sigma)} + X^{6(1-\sigma)}}(\log x)^B \ll x^{6(1-\sigma)/J}(\log x)^B.
\]
This gives (\ref{eq:RBound}) so long as $J > 6$ and $\Delta$ is sufficiently small. 

\subsection*{Case 1.2: $MT > X$, $\beta > \frac{2}{3}$}
In this case, we have $\beta < 1$. If $\beta > \frac{2}{3}$, then $(MT)^{1+\coll} \leq X^3$. Similar to Case 1, we choose $g$ so that
\[
X^3 \leq N^g \leq X^4
\]
and apply (\ref{eq:R-Coleman}) to obtain
\[
R \ll x^{8(1-\sigma)/J}(\log x)^B.
\]
We obtain (\ref{eq:RBound}) so long as $J > 8$ and $\Delta$ is sufficiently small. 

\subsection*{Case 1.3: $MT > X$, $\beta \leq \frac{2}{3}$, $\sigma \leq \frac{3}{4}$}

Here it suffices to use the estimates $\colb_1$ and $\colb_2$. A short computations shows that $\colb_1(\coll) \leq \frac{10}{3}(1-\sigma)$ so long as
\[
\coll \leq \pth{\frac{2}{3} - \beta(2\sigma-1)}(1-\sigma) = \coll^*,
\]
say. Since $\colb_2$ decreases in $\coll$, it suffices to check that $\colb_2(\coll^*) \leq \frac{10}{3}(1-\sigma)$. Another computation shows that this inequality holds so long as
\[
\beta \leq \frac{10\sigma-9}{12 \sigma^2-24 \sigma+9}.
\]
The expression on the right decreases in $\sigma$, and substituting $\sigma = \frac{3}{4}$, we see that $\colb_2(\coll^*) \leq \frac{10}{3}(1-\sigma)$ so long as $\beta \leq \frac{2}{3}$.

\subsection*{Case 1.4: $MT > X$, $\beta \leq \frac{2}{3}$, $\sigma > \frac{3}{4}$}

The proof of this case is very similar to Case 3, except that we use $\cola_3$ in place of $\cola_1$. Note that Cases 3 and 4 do not use any information about the size of $MT$ compared to $X$. As such, Cases 3 and 4 actually cover the entire range $\coll \in [0,1]$, $\beta \leq 2/3$, $\sigma \in (\frac{7}{10},1)$. 

\section{Long Polynomials: Subconvexity and Simplifications}\label{sec:LongPolynomials1}

We now suppose that $N > X$, in which case we may apply the following 
subconvexity estimate for Hecke $L$-functions.

\begin{lem}[Ricci]\label{lem:Subconvexity}
If $(4m^2+t^2) \geq 4$, then
\[
L\pth{\fract{1}{2}+it,\lambda^m} \ll (m^2+t^2)^{1/6}\log^3(m^2+t^2).
\]
\end{lem}

For a proof, see \citep{RicciThesis}, Chapter 2, Theorem 4. Since $N > X$, the coefficients of $f_m(1/2+it)$ are smooth and we may write (in the case $j > 1$)
\begin{equation}\label{eq:fmMellinInvert}
f_m\pth{\fract{1}{2}+it} = \frac{1}{2\pi i} \int\limits_{(0)} L\pth{\fract{1}{2}+it+s,\lambda^m} \tilde{W}(s) N^s ds.
\end{equation}
We have $m \geq 1$ always, so Lemma \ref{lem:Subconvexity} yields
\[
\begin{aligned}
f_m\pth{\fract{1}{2}+it} &\ll \int_{-\infty}^{\infty} \frac{\pth{m^2+t^2+y^2}^{1/6}\log^3(m^2+t^2+y^2)}{(1+\abs{y})^A}\ dy \\
&\ll (M^2+T^2)^{1/6}\log^3(M^2+T^2) \\
&\ll (MT)^{(1+\coll)/6}\log^3(M^2+T^2).
\end{aligned}
\]
If $j=1$, we write
\[
f_m\pth{\fract{1}{2}+it} = \log N \sum_{\ida} W\fracp{N\ida}{N} \frac{\lambda^m(\ida)}{(N\ida)^{1/2+it}} + \sum_{\ida} W\fracp{N\ida}{N}\log\fracp{N\ida}{N} \frac{\lambda^m(\ida)}{(N\ida)^{1/2+it}}.
\]
The first sum is handled in the same way as before. If $W(y)$ is replaced by $W^*(y)=W(y)\log y$, then $\tilde{W}^*$ decays rapidly on vertical lines just as $\tilde{W}$, and so in this case we obtain
\[
f_m\pth{\fract{1}{2}+it} \ll (MT)^{(1+\coll)/6}\log^3(M^2+T^2) \log N.
\]
Since $\abs{f_m\pth{\fract{1}{2}+it}} \gg (MT)^{\beta(\sigma-1/2)}(\log x)^4$, we deduce that
\begin{equation}\label{eq:Subconvexity}
\sigma \leq \frac{1}{2} + \frac{1+\coll}{6\beta}.
\end{equation}
We can also make a few simplifying assumptions. We may assume
\[
\sigma > \frac{7}{10},
\]
for otherwise
\[
R \leq MT \leq (MT)^{10(1-\sigma)/3}.
\]
In particular, we have $\sigma \leq \frac{7}{10}$ if $\beta \geq \frac{5}{6}(1+\coll)$. From the remarks in Case 4, we may also assume $\beta > \frac{2}{3}$. Thus we may limit our analysis to the situation in which
\begin{equation}\label{eq:simplifiedBeta}
\frac{2}{3} < \beta < \frac{5}{6}(1+\coll) \leq \frac{5}{3}.
\end{equation}

\section{Long Polynomials: Case Checking}\label{sec:LongPolynomials2}

\subsection*{Case 2.1: $\beta \leq \frac{5}{6}$, $\sigma \leq \frac{3}{4}$}

Fix $\sigma \in (\frac{7}{10},\frac{3}{4}]$ and $\beta \in (\frac{2}{3},\frac{5}{6}]$. We determine the largest value $\coll^*$ of $\coll$ for which $\cola_1$ is sufficient. Since $\cola_1$ is continuous and non-decreasing, we can compute $\coll^*$ as follows. We have 
\[
2\beta(n+1)(1-\sigma) \leq \frac{10}{3}(1-\sigma)
\]
so long as  $n \leq \frac{5}{3\beta}-1$. If $\beta\neq \frac{5}{6}$, then since $\floor{\frac{5}{3\beta}} = 2$ in the present case, it follows that $\coll^*$ lies in the interval $\hor{\coll_{2}^{(1,d)},\coll_{2}^{(1,e)}}$. The value $\coll^*$ is then given by the solution to
\[
1 + \coll^* - 2\beta (2\sigma-1) = \frac{10}{3}(1-\sigma).
\]
If $\beta =\frac{5}{6}$, then $\cola_1(\coll) = \frac{10}{3}(1-\sigma)$ for all $\coll \in \left[\coll_{1}^{(1,e)}, \coll_{2}^{(1,d)} \right]$, so we may take $\coll^* = \coll_2^{(1,d)} = \frac{2}{3}$. Thus in this case also, $\coll^*$ is given by the solution to the equation above.

We have $\cola_1(\coll) \leq \frac{10}{3}(1-\sigma)$ so long as $\coll \leq \coll^*$. Since $\cola_2$ is continuous and non-increasing, to estimate the remaining range of $\coll$, it suffices to check that $\cola_2(\coll^*) \leq \frac{10}{3}(1-\sigma)$. To evaluate $\cola_2(\coll^*)$, we need to determine $n^*$ such that the interval $\hor{\coll_{n^*}^{(2,d)}, \coll_{n^*+1}^{(2,d)}}$ contains $\coll^*$. A short computation shows that in the present case, we have
\[
n^* = \floor{\frac{5}{3\beta}(1-\sigma) + (2\sigma-1)} = 1.
\]
If $\coll^* \leq \coll_{1}^{(2,e)}$, then
\[
\cola_2(\coll^*) = 1+\beta (1-2\sigma)  = 1-\beta + 2\beta(1-\sigma)  \leq \frac{10}{3}(1-\sigma),
\]
where the last inequality follows from $1-\beta \leq \frac{1}{6} < \frac{5}{12} \leq \frac{5}{3}(1-\sigma)$. Otherwise if $\coll^* > \coll_{1}^{(2,e)}$, then again we have
\[
\cola_2(\coll^*) = \frac{1-\coll^*}{2} + 4\beta(1-\sigma) \leq \frac{1-\coll_{1}^{(2,e)}}{2} + 4\beta(1-\sigma) = 1-\beta + 2\beta(1-\sigma) \leq \frac{10}{3}(1-\sigma).
\]

\subsection*{Case 2.2: $\beta \leq \frac{5}{6}$, $\sigma >\frac{3}{4}$}

Fix $\sigma \in (\frac{3}{4},1)$ and $\beta \in (\frac{2}{3},\frac{5}{6}]$. The arguments for this case and the next are very similar to Case 2.1, so we will be fairly brief. As in Case 2.1, we determine the largest value $\coll^*$ of $\coll$ for which $\cola_3$ is sufficient. Arguing as in that case, we find that  $\coll^*$ is given by the solution to
\[
1 + \coll^* - 2\beta (6\sigma-4) = \frac{10}{3}(1-\sigma).
\]
We now check that $\cola_2(\coll^*) \leq \frac{10}{3}(1-\sigma)$. As before, we have $\coll^* \in \hor{\coll_{1}^{(2,d)}, \coll_{2}^{(2,d)}}$. If $\coll^* \leq \coll_{1}^{(2,e)}$, then
\[
\cola_2(\coll^*) = 1+\beta (1-2\sigma)  = 1-\beta + 2\beta(1-\sigma)  \leq \frac{10}{3}(1-\sigma)
\]
so long as $\sigma \leq \frac{9}{10}$, where the last inequality follows from $1-\beta \leq \frac{1}{6}  \leq \frac{5}{3}(1-\sigma)$. If $\sigma > \frac{9}{10}$, then $\coll^* > 1$, so $\cola_3$ suffices for all $\coll\in[0,1]$. If $\coll^* > \coll_{1}^{(2,e)}$, then just as in Case 2.1 we have
\[
\cola_2(\coll^*)  \leq \frac{10}{3}(1-\sigma).
\]

\subsection*{Case 2.3: $\beta > \frac{5}{6}$}

Fix $\sigma \in (\frac{7}{10},1)$ and $\beta \in (\frac{5}{6},\frac{5}{3})$. By the subconvexity restriction (\ref{eq:Subconvexity}), we may assume $\coll > 3\beta(2\sigma-1)-1 = \coll^*$, say, and since $\cola_2$ is non-increasing in $\coll$, it suffices to check that $\cola_2(\coll^*) \leq \frac{10}{3}(1-\sigma)$. The inequalities $\coll_{0}^{(2,e)} \leq \coll^* \leq \coll_1^{(2,d)}$ are easy to verify (the interval $\left[\coll_{0}^{(2,e)}, \coll_1^{(2,d)}\right]$ may intersect only part of $[0,1]$, but this is immaterial). It follows that
\[
\begin{aligned}
\cola_2(\coll^*) &= \frac{1-\pth{3\beta(2\sigma-1)-1}}{2} + 2\beta(1-\sigma) = 1 + 3\beta(\fract{1}{2}-\sigma) + 2\beta(1-\sigma) \\
&= 1 - \frac{3\beta}{2} + 5\beta(1-\sigma) \leq \frac{10}{3}(1-\sigma)\pth{1 - \frac{3\beta}{2}} + 5\beta(1-\sigma) = \frac{10}{3}(1-\sigma).
\end{aligned}
\]

\section{Optimality of $\frac{10}{3}$}\label{sec:Conclusion}

There are two sets of values of $\coll,\beta,\sigma$ which show that the constant $\frac{10}{3}$ is optimal in our analysis. These are
\[
\coll=\frac{3}{5},\ \beta=\frac{4}{3},\ \sigma=\frac{7}{10} \mand \coll=1,\ \beta=\frac{5}{6},\ \sigma=\frac{9}{10}.
\]
In the cases above where these values occur, one may check that the inequalities used are sharp, and so $\frac{10}{3}$ cannot be improved. The optimal large sieve (\ref{eq:OptimalLargeSieve}) would eliminate the need for the variable $\coll$, but the particular case $\beta=\frac{5}{6}, \sigma=\frac{9}{10}$ remains a worst case when using this estimate.

\bibliography{references}

\end{document}